\numberwithin{equation}{section}
\newtheorem{remark}{Remark}[section]
\title{Highly efficient exponential scalar auxiliary variable approaches with relaxation (RE-SAV) for gradient flows.
        \thanks{
We would like to acknowledge the assistance of volunteers in putting together this example manuscript and supplement. This work is supported by National Natural Science Foundation of China (Grant Nos: 12001336, 11901489, 12131014).}}
      \author{Zhengguang Liu
             \thanks{School of Mathematics and Statistics, Shandong Normal University, Jinan, China. Email: liuzhg@sdnu.edu.cn.}
                                       \and
             Xiaoli Li\textsuperscript{*}
             \thanks{Corresponding author: Shandong University, Jinan, Shandong, 250100, China. Email: xiaomath@sdu.edu.cn}. }
\begin{document}
\UseRawInputEncoding
\maketitle

\begin{abstract}
For the past few years, scalar auxiliary variable (SAV) and SAV-type approaches became very hot and efficient methods to simulate various gradient flows. Inspired by the new SAV approach in \cite{huang2020highly}, we propose a novel technique to construct a new exponential scalar auxiliary variable (E-SAV) approach to construct high-order numerical energy stable schemes for gradient flows. To improve its accuracy and consistency noticeably, we propose an E-SAV approach with relaxation, which we named the relaxed E-SAV (RE-SAV) method for gradient flows. The RE-SAV approach preserves all the advantages of the traditional SAV approach. In addition, we do not need any the bounded-from-below assumptions for the free energy potential or nonlinear term. Besides, the first-order, second-order and higher-order unconditionally energy stable time-stepping schemes are easy to construct. Several numerical examples are provided to demonstrate the improved efficiency and accuracy of the proposed method.
\end{abstract}

\begin{keywords}
Scalar auxiliary variable, Gradient flows, High-order scheme, RE-SAV approach, Energy stable.
\end{keywords}

    \begin{AMS}
         65M12; 35K20; 35K35; 35K55; 65Z05.
    \end{AMS}

\pagestyle{myheadings}
\thispagestyle{plain}
\markboth{ZHENGGUANG LIU AND XIAOLI LI} {RE-SAV APPROACH FOR GRADIENT FLOW}
  \section{Introduction}
The gradient flow models are very important and popular dissipative systems which cover a lot of fields such as alloy casting, new material preparation, image processing, finance and so on \cite{ambati2015review,guo2015thermodynamically,liu2019efficient,liu2020two,marth2016margination,miehe2010phase,shen2015efficient,wheeler1993computation}. Many classical gradient flow models such as Allen-Cahn model \cite{ainsworth2017analysis,du2019maximum,guan2014second,shen2010numerical,yang2020convergence,zhai2014numerical}, Cahn-Hilliard model \cite{chen2019fast,du2018stabilized,he2007large,shen2010numerical,weng2017fourier,yang2018numerical,zhu1999coarsening} and phase field crystal model \cite{li2018unconditionally,li2019efficient,li2017efficient,liu2020two,yang2017linearly} have been widely used to solve a series of physical problems. Gradient flow models are generally derived from the functional variation of free energy. In general, the free energy $E(\phi)$ contains the sum of an integral phase of a nonlinear functional and a quadratic term:
\begin{equation}\label{intro-e1}
E(\phi)=\frac12(\phi,\mathcal{L}\phi)+E_1(\phi)=\frac12(\phi,\mathcal{L}\phi)+\int_\Omega F(\phi)d\textbf{x},
\end{equation}
where $\mathcal{L}$ is a symmetric non-negative linear operator, and $E_1(\phi)=\int_\Omega F(\phi)d\textbf{x}$ is nonlinear free energy. $F(\textbf{x})$ is the energy density function. The gradient flow from the energetic variation of the above energy functional $E(\phi)$ in \eqref{intro-e1} can be obtained as follows:
\begin{equation}\label{intro-e2}
\displaystyle\frac{\partial \phi}{\partial t}=-\mathcal{G}\mu,\quad\mu=\displaystyle\mathcal{L}\phi+F'(\phi),
\end{equation}
where $\mu=\frac{\delta E}{\delta \phi}$ is the chemical potential. $\mathcal{G}$ is a positive operator. For example, $\mathcal{G}=I$ for the $L^2$ gradient flow and $\mathcal{G}=-\Delta$ for the $H^{-1}$ gradient flow.

It is not difficult to find that the above phase field system satisfies the following energy dissipation law:
\begin{equation*}
\frac{d}{dt}E=(\frac{\delta E}{\delta \phi},\frac{\partial\phi}{\partial t})=-(\mathcal{G}\mu,\mu)\leq0,
\end{equation*}
which is a very important property for gradient flows in physics and mathematics. From a mathematical point of view, whether the numerical methods can maintain the discrete energy dissipation law is an important stability indicator. Up to now, many scholars considered a series of efficient and popular time discretized approaches to construct energy stable schemes for different phase field models such as convex splitting approach \cite{eyre1998unconditionally,shen2012second,shin2016first}, linear stabilized approach \cite{shen2010numerical,yang2017numerical}, exponential time differencing (ETD) approach \cite{du2019maximum,WangEfficient}, invariant energy quadratization (IEQ) approach \cite{chen2019efficient,chen2019fast,yang2016linear}, scalar auxiliary variable (SAV) approach \cite{xiaoli2019energy,shen2018scalar,ShenA} and so on.

Compared with other numerical methods, the SAV approach has many incomparable advantages. One is that it is very easy to construct linear, second-order and unconditionally energy stable schemes. Until now, it has been applied successfully to simulate many classical gradient flows such as Allen-Cahn models \cite{yang2020convergence,shen2018scalar}, Cahn-Hilliard models \cite{yang2018numerical,li2019energy}, phase field crystal models \cite{liu2019efficient,li2020stability}, molecular beam epitaxial growth model \cite{YangNumerical,cheng2019highly}, Cahn-Hilliard-Navier-Stokes models \cite{li2019sav} and so on. It is worth mentioning that the dissipative system without gradient flow structure, such as Navier-Stokes models \cite{li2020new,lin2019numerical,xiaoli2020error} can also be simulated effectively by the SAV approach. Another advantage of SAV method is that high-order energy stable numerical schemes can be constructed successfully by Runge-Kutta method \cite{akrivis2019energy} or $k$-step backward differentiation formula (BDF$k$) \cite{huang2020highly}.

Recently, many variants of SAV schemes are developed to modify the shortcomings of the traditional SAV approach. Some of the SAV-type methods change the definition of the introduced SAV. For example, in \cite{yang2020roadmap}, the authors introduced the generalized auxiliary variable method for devising energy stable schemes for general dissipative systems. An exponential SAV approach in \cite{liu2020exponential} is developed to modify the traditional method to construct energy stable schemes by introducing an exponential SAV. A series of generalized SAV approaches which extend the applicability of the original SAV approach for gradient systems can be found in \cite{cheng2021generalized}. In \cite{huang2020highly}, the authors consider a new SAV approach to construct high-order energy stable schemes. In \cite{jiang2022improving}, Jiang et al. present a relaxation technique to construct a relaxed SAV (RSAV) approach to improve the accuracy and consistency noticeably.

In this paper, we first propose a novel technique to construct a new exponential scalar auxiliary variable approach. By introducing a new exponential SAV, we use $k$-step backward differentiation formula (BDF$k$) to construct high-order energy stable schemes. Furthermore, the new proposed E-SAV method only needs to solve one linear equation with constant coefficients at each time step. Meanwhile, based on the exponential function, the new E-SAV approach can remove the assumptions of bounded-from-below for the free energy potential or nonlinear term. Besides, to improve its accuracy and consistency noticeably, we apply the relaxation technique which was considered in \cite{jiang2022improving} to propose a relaxed E-SAV (RE-SAV) method for gradient flows. The relaxation technique can improve the accuracy and eliminate the potential failure caused by the exponential growth.

The paper is organized as follows. In Sect.2, we provide a brief review of the SAV-type approaches. In Sect.2, we consider a new procedure to obtain a new energy stable exponential SAV (E-SAV) approach and construct first-order scheme in time. In Sect.4, some high-order unconditionally energy stable schemes with $k$-step backward differentiation formula are constructed. In Sect.5, we consider a relaxation technique to construct a relaxed E-SAV (RE-SAV) method to improve accuracy and consistency noticeably. Finally, in Sect.6, various 2D numerical simulations are demonstrated to verify the accuracy and efficiency of our proposed schemes.

\section{The review of the SAV-type approaches}
In this section, in order to show and give a comparative study for our new E-SAV approach, we provide below a brief review of the new SAV approach in \cite{huang2020highly} and the traditional E-SAV method in \cite{liu2020exponential} to construct energy stable schemes for gradient flows.
\subsection{The new SAV approach}
Assume that the energy $E(\phi)$ is bounded from below which means that there is a constant $C_0>0$ to satisfy $E(\phi)+C_0>0$. we then introduce the following scalar auxiliary variable to obtain an equivalent gradient flow model
\begin{equation}\label{intro-e3}
\aligned
R(t)=E(\phi)+C=\frac12(\phi,\mathcal{L}\phi)+\int_\Omega F(\phi)d\textbf{x}+C,
\endaligned
\end{equation}
where $C\geq C_0$ is a chosen scalar such that $R(t)>0$. Obviously, $R(t)$ can be seen as a shifted total energy. Performing integration by parts, one can find that the gradient flow system will satisfy the following energy dissipative law:
\begin{equation}\label{intro-e4}
\frac{dR(t)}{dt}=\frac{dE}{dt}=-(\mathcal{G}\mu,\mu)\leq0.
\end{equation}

Considering the definition of $R(t)$, we define a new function $\xi(t)=\frac{R(t)}{E(\phi)+C}$. It is obviously that $\xi(t)\equiv1$ at the continuous level. Other than that, to obtain the high-order numerical schemes, we introduce a new function $\theta(t)$ which can be an arbitrary function at the continuous level. Obviously, $\theta+(1-\theta)\xi\equiv1$ at the continuous level. Then, the gradient flow model \eqref{intro-e2} can be transformed into the following equivalent formulation:
\begin{equation}\label{intro-equation}
   \begin{array}{rll}
\displaystyle\frac{\partial \phi}{\partial t}&=&-\mathcal{G}\mu,\\
\mu&=&\mathcal{L}\phi+[\theta+(1-\theta)\xi]F'(\phi),\\
\xi(t)&=&\displaystyle\frac{R(t)}{E(\phi)+C},\\
\displaystyle\frac{d R}{d t}&=&\displaystyle\xi(\mu,\Delta\mu).
   \end{array}
  \end{equation}
We discretize the nonlinear term $F'(\phi)$ and $\theta$ explicitly and discretize $\phi$, $\mu$, $R$ and $\xi$ implicitly, then couple with $k$-step backward differentiation formula (BDF$k$),  the high-order unconditionally energy stable schemes can be constructed as follows:
\begin{equation}\label{intro-e5}
   \begin{array}{l}
\displaystyle\frac{\alpha\phi^{n+1}-\left[\theta^n+(1-\theta^n)\xi^{n+1}\right]\widehat{\phi}^{n}}{\Delta t}=-\mathcal{G}\mu^{n+1},\\
\mu^{n+1}=\displaystyle\mathcal{L}\phi^{n+1}+\left[\theta^n+(1-\theta^n)\xi^{n+1}\right]F'(\phi^{\ast,n+1}),\\
\xi^{n+1}=\displaystyle\frac{R^{n+1}}{E(\overline{\phi}^{n+1})+C},\\
\displaystyle\frac{R^{n+1}-R^n}{\Delta t}=-\xi^{n+1}(\mathcal{G}\overline{\mu}^{n+1},\overline{\mu}^{n+1}),
   \end{array}
\end{equation}
where $\theta^n=1+O(\Delta t^k)$ can be obtained from \cite{huang2020highly}. Here, $\alpha$, $\widehat{\phi}^{n}$ and $\phi^{\ast,n+1}$ in equation \eqref{intro-e5} are defined as follows:\\
BDF2:
\begin{equation}\label{intro-e6}
\alpha=\frac{3}{2},\quad\widehat{\phi}^{n}=2\phi^n-\frac{1}{2}\phi^{n-1},\quad\phi^{\ast,n+1}=2\phi^n-\phi^{n-1}.
\end{equation}
BDF3:
\begin{equation}\label{intro-e7}
\alpha=\frac{11}{6},\quad\widehat{\phi}^{n}=3\phi^n-\frac{3}{2}\phi^{n-1}+\frac13\phi^{n-2},\quad\phi^{\ast,n+1}=3\phi^n-3\phi^{n-1}+\phi^{n-2}.
\end{equation}
Some other high-order BDF$k$ ($k\geq4$) can be obtained from \cite{huang2020highly}. It's not difficult to prove that the BDF$k$ scheme \eqref{intro-e5} has $k$-th accuracy for $\phi$. By giving an arbitrary function $\theta(t)$ to satisfy $\theta=1+O(\Delta t^k)$, we can direct to observe that
\begin{equation}\label{intro-e8}
\displaystyle\frac{\alpha\phi^{n+1}-\left[\theta^n+(1-\theta^n)\xi^{n+1}\right]\widehat{\phi}^{n}}{\Delta t}=
\displaystyle\frac{\alpha\phi^{n+1}-\widehat{\phi}^{n}}{\Delta t}+\frac{(1-\theta^n)(1-\xi^{n+1})}{\Delta t}=\left.\frac{\partial \phi}{\partial t}\right|^{n+1}+O(\Delta t^k).
\end{equation}
Combining the first two equations in \eqref{intro-e6}, we can obtain the following linear matrix equation
\begin{equation*}
\aligned
(\alpha I+\Delta t\mathcal{G}\mathcal{L})\phi^{n+1}=\left[\theta^n+(1-\theta^n)\xi^{n+1}\right](\widehat{\phi}^n-\Delta t\mathcal{G}F'(\phi^{\ast,n+1})),
\endaligned
\end{equation*}
which means that we only require solving one linear equation with constant coefficients (see more details in \cite{huang2020highly}).

By introducing a new SAV $\theta$, the new SAV approach enjoys the following remarkable properties: (1) it only requires solving one linear system with constant coefficients at each time step; (2) it only requires the energy functional $E(\phi)$ be bounded from below; (3) it is extendable to higher-order BDF type energy stable schemes.
\subsection{The traditional E-SAV approach}
Introduce an exponential scalar auxiliary variable (E-SAV) as follows:
\begin{equation}\label{esav-e1}
\aligned
r(t)=\exp\left(E_1(\phi)\right)=\exp\left(\int_\Omega F(\phi)d\textbf{x}\right)>0.
\endaligned
\end{equation}
It is obviously $r(t)>0$ for any $t$. Then, the nonlinear functional $F'(\phi)$ in \eqref{intro-e2} can be transformed as the following equivalent formulation:
\begin{equation*}
F'(\phi)=\frac{r}{r}F'(\phi)=\frac{r}{\exp\left(E_1(\phi)\right)}F'(\phi).
\end{equation*}
Then, we have
\begin{equation*}
\aligned
&r_t=\displaystyle r\int_{\Omega}{F'}(\phi)\phi_td\textbf{x}.
\endaligned
\end{equation*}
Noting that $r(t)>0$ for any $t$, Then we have
\begin{equation}\label{esav-e2}
\aligned
&\frac{d\ln r}{dt}=\int_{\Omega}{F'}(\phi)\phi_td\textbf{x}=\frac{r}{\exp\left(E_1(\phi)\right)}\int_{\Omega}{F'(\phi)}\phi_td\textbf{x}.
\endaligned
\end{equation}
Then, the gradient flow system \eqref{intro-e2} can be transformed as follows:
\begin{equation}\label{esav-e3}
  \left\{
   \begin{array}{rll}
\displaystyle\frac{\partial \phi}{\partial t}&=&-\mathcal{G}\mu,\\
\mu&=&\displaystyle\mathcal{L}\phi+\frac{r}{\exp\left(E_1(\phi)\right)}F'(\phi),\\
\displaystyle\frac{d\ln r}{dt}&=&\displaystyle\frac{r}{\exp\left(E_1(\phi)\right)}\int_{\Omega}{F'(\phi)}\phi_td\textbf{x}.
   \end{array}
   \right.
\end{equation}
The above equivalent system satisfies the following energy dissipation law:
\begin{equation*}
\frac{d}{dt}\left[\frac12(\mathcal{L}\phi,\phi)+\ln(r)\right]=-(\mathcal{G}\mu,\mu)\leq0.
\end{equation*}
For the sake of simplicity, we only give the following first-order semi-implicit scheme:
\begin{equation*}
  \left\{
   \begin{array}{rll}
\displaystyle\frac{\phi^{n+1}-\phi^{n}}{\Delta t}&=&-\mathcal{G}\mu^{n+1},\\
\mu^{n+1}&=&\displaystyle\mathcal{L}\phi^{n+1}+\frac{r^{n+1}}{\exp\left(E_1(\phi^n)\right)}F'(\phi^n),\\
\displaystyle\frac{\ln(r^{n+1})-\ln(r^n)}{\Delta t}&=&\displaystyle\frac{r^{n+1}}{\exp\left(E_1(\phi^n)\right)}\left(F'(\phi^n),\frac{\phi^{n+1}-\phi^{n}}{\Delta t}\right).
   \end{array}
   \right.
\end{equation*}
It is not difficult to obtain the following discrete energy law:
\begin{equation*}
\aligned
\frac{1}{\Delta t}\left[E_{1st}^{n+1}-E^{n}_{1st}\right]\leq-(\mathcal{G}\mu^{n+1},\mu^{n+1})-\frac{1}{2\Delta t}(\phi^{n+1}-\phi^n,\mathcal{L}(\phi^{n+1}-\phi^n))\leq0,
\endaligned
\end{equation*}
where $E_{1st}^{n}=\frac12(\phi^n,\mathcal{L}\phi^{n})+\ln(r^n).$
\section{A new exponential SAV approach}
The new SAV approach in \cite{huang2020highly} has to introduce an extra function $\theta(t)$ to construct high-order energy stable schemes. The explicit treating of $\theta$ is not a good enough choice for the discretization of the nonlinear term $[\theta+(1-\theta)\xi]F'(\phi)$. Besides, we need to assume that the energy $E(\phi)$ is bounded from below which means that there is a constant $C_0>0$ to satisfy $E(\phi)+C_0>0$. In this section, we will consider a new technique to modify this method.

Before giving a detailed introduction, we let $N>0$ be a positive integer and set
\begin{equation*}
\Delta t=T/N,\quad t^n=n\Delta t,\quad \text{for}\quad n\leq N.
\end{equation*}

As we all know, exponential function is a special function that keeps the positive property. Thus, we can introduce the following new exponential scalar auxiliary variable:
\begin{equation}\label{exesav-e1}
\aligned
R(t)=\exp\left(E(\phi)\right)=\exp\left((\phi,\mathcal{L}\phi)+\int_\Omega F(\phi)d\textbf{x}\right).
\endaligned
\end{equation}
It is obviously $R(t)>0$ for any $t$. Now it's easy to obtain the following modified energy dissipation law:
\begin{equation*}
\frac{dR}{dt}=R\frac{dE}{dt}=-R(\mathcal{G}\mu,\mu)=-\exp(E(\phi))(\mathcal{G}\mu,\mu)\leq0.
\end{equation*}
Noticing that $\ln(R)=\ln(\exp(E(\phi)))=E(\phi)$, we can obtain the original energy dissipation law:
\begin{equation*}
\frac{dE}{dt}=\frac{d\ln(R)}{dt}=\frac{1}{R}\frac{dR}{dt}=-(\mathcal{G}\mu,\mu)\leq0.
\end{equation*}

Define $\xi=\frac{R}{\exp(E(\phi))}$. It can be easily obtained that $\xi\equiv1$ at the continuous level. Next we will introduce a new functional $U(\xi)$ to obtain the high-order approximation of $1$. $U(\xi)$ can be chosen as many formulas such as $U_2(\xi)=\xi(2-\xi)$ or $U_3(\xi)=(2-\xi)(\xi^2-\xi+1)$. Based on the exponential SAV $R(t)$ and the introduced function $U(\xi)$, the gradient flow \eqref{intro-equation} can be rewritten as the following equivalent system:
\begin{equation}\label{exesav-equation}
   \begin{array}{rll}
\displaystyle\frac{\partial \phi}{\partial t}&=&-\mathcal{G}\mu,\\
\mu&=&\mathcal{L}\phi+U(\xi)F'(\phi),\\
\xi(t)&=&\displaystyle\frac{R(t)}{\exp(E(\phi))},\\
\displaystyle\frac{d R}{d t}&=&\displaystyle-R(\mu,\mathcal{G}\mu).
   \end{array}
  \end{equation}

A first-order scheme for solving above system \eqref{exesav-equation} can be readily derived by the first-order backward Euler method as follows:
\begin{equation}\label{exesav-first-e1}
   \begin{array}{l}
\displaystyle\frac{\phi^{n+1}-U(\xi^{n+1})\phi^n}{\Delta t}=-\mathcal{G}\mu^{n+1},\\
\mu^{n+1}=\mathcal{L}\phi^{n+1}+U(\xi^{n+1})F'(\phi^n),\\
\xi^{n+1}=\displaystyle\frac{R^{n+1}}{\exp(E(\overline{\phi}^{n+1}))},\\
\displaystyle\frac{R^{n+1}-R^n}{\Delta t}=-R^{n+1}(\mathcal{G}\overline{\mu}^{n+1},\overline{\mu}^{n+1}),\\
U(\xi^{n+1})=\xi^{n+1}(2-\xi^{n+1}),
   \end{array}
\end{equation}
with the initial conditions
\begin{equation*}
\phi^0=\phi_0(x,t),\quad R^0=\exp(E(\phi^0)).
\end{equation*}

In \eqref{exesav-first-e1}, we give a first-order scheme for the variable $R$ which means $R^{n+1}=R(t^{n+1})+O(\Delta t)=1+O(\Delta t)$, we then obtain
\begin{equation*}
\xi^{n+1}=\xi(t^{n+1})+C_1\Delta t=1+C_1\Delta t.
\end{equation*}
Then, we can obtain the following equation
\begin{equation*}
U(\xi^{n+1})=\xi^{n+1}(2-\xi^{n+1})=(1+C_1\Delta t)(1-C_1\Delta t)=1-C_1^2\Delta t^2.
\end{equation*}
which means $1-U(\xi^{n+1})=O(\Delta t^2)$.

In this way, it is direct to observe that
\begin{equation}\label{exesav-first-e2}
\displaystyle\frac{\phi^{n+1}-U(\xi^{n+1})\phi^n}{\Delta t}=\frac{\phi^{n+1}-\phi^n}{\Delta t}+\frac{[1-U(\xi^{n+1})]\phi^n}{\Delta t}=\left.\frac{\partial\phi}{\partial t}\right|^{n+1}+O(\Delta t).
\end{equation}

Combining the first two equations in \eqref{exesav-first-e1} leads to the following linear equation
\begin{equation}\label{esav-first-e3}
(I+\Delta t\mathcal{G}\mathcal{L})\phi^{n+1}=U(\xi^{n+1})\left[\phi^n-\Delta t\mathcal{G}F'(\phi^n)\right].
\end{equation}
If we set
\begin{equation*}
\phi^{n+1}=U(\xi^{n+1})\overline{\phi}^{n+1}.
\end{equation*}
Then, we can compute $\overline{\phi}^{n+1}$ directly by using $\phi^n$ only:
\begin{equation}\label{exesav-first-e4}
(I+\Delta t\mathcal{G}\mathcal{L})\overline{\phi}^{n+1}=\left[\phi^n-\Delta t\mathcal{G}F'(\phi^n)\right].
\end{equation}

Next, we can compute $\xi^{n+1}$ and $R^{n+1}$ from the third and fourth equations in \eqref{exesav-first-e1} by giving the following definition:
\begin{equation*}
\overline{\mu}^{n+1}=\mathcal{L}\overline{\phi}^{n+1}+F'(\overline{\phi}^{n+1}).
\end{equation*}

Naturally, $R^{n+1}$ and $\xi^{n+1}$ can be solved out step-by-step by the following equations:
\begin{equation}\label{exesav-first-e5}
\displaystyle R^{n+1}=\frac{R^n}{1+\Delta t(\mathcal{G}\overline{\mu}^{n+1},\overline{\mu}^{n+1})},\quad\xi^{n+1}=\frac{R^{n+1}}{\exp(E(\overline{\phi}^{n+1}))}.
\end{equation}
Then, $\phi^{n+1}=U_k(\xi^{n+1})\overline{\phi}^{n+1}$ can be solved out immediately.

Let's see how the new method differs from the proposed method in \cite{huang2020highly}. In this new E-SAV scheme, We introduce a new functional $U(\xi)$ to replace $\theta+(1-\theta)\xi$. A benefit of this change is that $U(\xi)$ can be treated totally implicit which avoids the unknown problems of explicit discretization of $\theta$. Meanwhile, the two schemes have the same computational costs. To summarize, the first-order scheme \eqref{exesav-first-e1} can be implemented as follows:
\begin{enumerate}
  \item[] $\bullet$ compute $\overline{\phi}^{n+1}$ from the linear equation \eqref{exesav-first-e4};
  \item[] $\bullet$ set $\overline{\mu}^{n+1}=\mathcal{L}\overline{\phi}^{n+1}+F'(\overline{\phi}^{n+1})$ and compute $R^{n+1}$ and $\xi^{n+1}$ from \eqref{exesav-first-e5};
  \item[] $\bullet$ update $\phi^{n+1}=\xi^{n+1}(2-\xi^{n+1})\overline{\phi}^{n+1}$ and go to the next time step.
\end{enumerate}
The first-order new E-SAV scheme \eqref{exesav-first-e1} can save half CPU times compared with the traditional SAV scheme. We observe that $\phi^{n+1}$ and $\xi^{n+1}$ can be solved step by step which means that the above procedure only requires solving one linear equation with constant coefficients as in the standard semi-implicit scheme. As for the energy stability, we have the following theorem.
\begin{theorem}\label{exesav-th1}
Given $R^n>0$, we then obtain $R^{n+1}>0$. The scheme \eqref{exesav-first-e1} for the equivalent phase field system \eqref{exesav-equation} is unconditionally energy stable in the sense that
\begin{equation*}
\aligned
\displaystyle R^{n+1}=\frac{R^n}{1+\Delta t(\mathcal{G}\overline{\mu}^{n+1},\overline{\mu}^{n+1})}\leq R^n.
\endaligned
\end{equation*}
and more importantly we have
\begin{equation*}
\aligned
\ln R^{n+1}-\ln R^n\leq0.
\endaligned
\end{equation*}
\end{theorem}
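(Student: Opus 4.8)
The plan is to read the closed-form update for $R^{n+1}$ directly off the fourth equation of scheme~\eqref{exesav-first-e1} and then exploit the positivity of the operator $\mathcal{G}$. First I would rearrange the discrete evolution equation
\begin{equation*}
\frac{R^{n+1}-R^n}{\Delta t}=-R^{n+1}(\mathcal{G}\overline{\mu}^{n+1},\overline{\mu}^{n+1})
\end{equation*}
to isolate $R^{n+1}$, collecting the two terms containing $R^{n+1}$ on one side. This gives $R^{n+1}\bigl[1+\Delta t(\mathcal{G}\overline{\mu}^{n+1},\overline{\mu}^{n+1})\bigr]=R^n$, hence $R^{n+1}=R^n/\bigl[1+\Delta t(\mathcal{G}\overline{\mu}^{n+1},\overline{\mu}^{n+1})\bigr]$, which is precisely the first claimed identity in~\eqref{exesav-first-e5}.

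The key structural input is that $\mathcal{G}$ is a positive operator, so $(\mathcal{G}\overline{\mu}^{n+1},\overline{\mu}^{n+1})\geq0$. Together with $\Delta t>0$ this bounds the denominator below by $1$, i.e.\ $1+\Delta t(\mathcal{G}\overline{\mu}^{n+1},\overline{\mu}^{n+1})\geq1>0$. From this single estimate the first two conclusions follow at once: dividing the positive quantity $R^n>0$ by a strictly positive denominator yields $R^{n+1}>0$; and since the denominator is at least $1$, we obtain $R^{n+1}\leq R^n$, which is the advertised energy decay.

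For the logarithmic form I would simply invoke monotonicity of $\ln(\cdot)$: since $0<R^{n+1}\leq R^n$, applying the logarithm preserves the inequality and yields $\ln R^{n+1}-\ln R^n\leq0$, the modified energy dissipation law at the discrete level.

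I do not expect any genuine obstacle here, since the argument is purely algebraic once the update formula is in hand and the only ingredient is the stated positivity of $\mathcal{G}$. The one point worth making explicit is that positivity propagates by induction: one starts from $R^0=\exp(E(\phi^0))>0$, and the step-by-step solvability exhibited in~\eqref{exesav-first-e4}--\eqref{exesav-first-e5} then guarantees $R^n>0$ for every $n$, so the hypothesis $R^n>0$ is automatically maintained and all logarithms remain well defined.
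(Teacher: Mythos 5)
Your proposal is correct and follows essentially the same route as the paper's own proof: solve the fourth equation of \eqref{exesav-first-e1} for $R^{n+1}$ to get the closed-form update \eqref{exesav-first-e5}, use $(\mathcal{G}\overline{\mu}^{n+1},\overline{\mu}^{n+1})\geq0$ to bound the denominator below by $1$ (giving both $R^{n+1}>0$ by induction from $R^0=\exp(E(\phi^0))>0$ and $R^{n+1}\leq R^n$), and then invoke monotonicity of $\ln(\cdot)$ for the modified energy law. Your write-up is, if anything, slightly cleaner, since you make explicit the rearrangement yielding the update formula and the strict positivity of the denominator, both of which the paper leaves implicit.
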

\begin{proof}
Using the definition of $R(t)$, we can obtain $R^0=\exp(\phi^0)>0$. Assuming that $R^n>0$ for all $n=1,2,\ldots,K$, then we will prove $R^{K+1}>0$. From \eqref{exesav-first-e5}, we can obtain
\begin{equation*}
\aligned
\displaystyle R^{K+1}=\frac{R^K}{1+\Delta t(\mathcal{G}\overline{\mu}^{K+1},\overline{\mu}^{K+1})}
\endaligned
\end{equation*}
Noticing that $(\mathcal{G}\overline{\mu}^{K+1},\overline{\mu}^{K+1})\geq0$ for any $\overline{\mu}^{K+1}$ and $R^K>0$, then we obtain $R^{K+1}>0$. By mathematical induction, we get $R^{n+1}>0$ for any $n>0$.

Next we will give a proof of energy stability. Using the inequality $1+\Delta t(\mathcal{G}\overline{\mu}^{n+1},\overline{\mu}^{n+1})$ for any $n>0$, we immediately obtain
\begin{equation*}
\aligned
\displaystyle R^{n+1}=\frac{R^n}{1+\Delta t(\mathcal{G}\overline{\mu}^{n+1},\overline{\mu}^{n+1})}\leq R^n.
\endaligned
\end{equation*}
We observe that $E(\phi)=\ln(\exp(E(\phi)))=\ln(R)$ which means $\ln(R^n)$ will be the modified energy. Noting that the logarithm function $y=\ln(x)$ is a strictly monotone increasing function and $R^n>0$, we can also obtain the following energy stability:
\begin{equation*}
\aligned
\ln R^{n+1}-\ln R^n\leq0.
\endaligned
\end{equation*}
\end{proof}
\section{The high-order E-SAV BDF$k$ scheme}
In the first-order energy stable scheme \eqref{exesav-first-e1}, we define $U(\xi)=\xi(2-\xi)$. By using the first-order approximation for $\xi$, we obtain $U(\xi^{n+1})=1-O(\Delta t^2)$ which will not influence the first-order accuracy for $\phi$. We observe that if we combine a proper functional $U(\xi)$ with the first-order $\xi^{n+1}=1+O(\Delta t)$, we can obtain the discrete formulation $U(\xi^{n+1})=1-O(\Delta t^{k+1})$ for any $k\geq1$. Then, we can achieve overall $k$th-order accuracy coupled with $k$-step  backward differentiation formula for $\phi$ by using just a first-order approximation for $R$ and $\xi$.

We set $U(\xi)=U_k(\xi)$ and discretize the nonlinear term $F'(\phi)$ explicitly and discretize $\phi$, $\mu$, $R$ and $U$ implicitly, then couple with $k$-step backward differentiation formula (BDF$k$), the high-order unconditionally energy stable schemes can be constructed as follows:
\begin{equation}\label{exhesav-e1}
   \begin{array}{l}
\displaystyle\frac{\alpha\phi^{n+1}-U_k(\xi^{n+1})\widehat{\phi}^{n}}{\Delta t}=-\mathcal{G}\mu^{n+1},\\
\mu^{n+1}=\displaystyle\mathcal{L}\phi^{n+1}+U_k(\xi^{n+1})F'(\phi^{\ast,n+1}),\\
\xi^{n+1}=\displaystyle\frac{R^{n+1}}{\exp(E(\overline{\phi}^{n+1}))},\\
\displaystyle\frac{R^{n+1}-R^n}{\Delta t}=-R^{n+1}(\mathcal{G}\overline{\mu}^{n+1},\overline{\mu}^{n+1}),
   \end{array}
\end{equation}
Here, $\alpha$, $\widehat{\phi}^{n}$, $\phi^{\ast,n+1}$ and $U_k(\xi)$ in equation \eqref{exhesav-e1} are defined as follows:\\
BDF1:
\begin{equation}\label{exhesav-e2}
\alpha=1,\quad\widehat{\phi}^{n}=\phi^n,\quad\phi^{\ast,n+1}=\phi^n,\quad U_1(\xi)=\xi(2-\xi).
\end{equation}
BDF2:
\begin{equation}\label{exhesav-e3}
\alpha=\frac{3}{2},\quad\widehat{\phi}^{n}=2\phi^n-\frac{1}{2}\phi^{n-1},\quad\phi^{\ast,n+1}=2\phi^n-\phi^{n-1},\quad U_2(\xi)=(2-\xi)(\xi^2-\xi+1).
\end{equation}
BDF3:
\begin{equation}\label{exhesav-e4}
\alpha=\frac{11}{6},\quad\widehat{\phi}^{n}=3\phi^n-\frac{3}{2}\phi^{n-1}+\frac13\phi^{n-2},\quad\phi^{\ast,n+1}=3\phi^n-3\phi^{n-1}+\phi^{n-2},\quad U_3(\xi)=\xi(2-\xi)(\xi^2-2\xi+2).
\end{equation}
BDF4:
\begin{equation}\label{exhesav-e5}
\aligned
&\alpha=\frac{25}{12},\quad\widehat{\phi}^{n}=4\phi^n-3\phi^{n-1}+\frac43\phi^{n-2}-\frac14\phi^{n-3},\quad\phi^{\ast,n+1}=4\phi^n-6\phi^{n-1}+4\phi^{n-2}-\phi^{n-3},\\ &U_4(\xi)=(2-\xi)(\xi^4-3\xi^3+4\xi^2-2\xi+1).
\endaligned
\end{equation}
\begin{lemma}
$U_k(\xi^{n+1})$ is a $(k+1)$-th approximation to $1$ for $k=1,2,3,4$.
\end{lemma}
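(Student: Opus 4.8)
The plan is to reduce the whole statement to one algebraic observation, namely that each polynomial $U_k$ listed in \eqref{exhesav-e2}--\eqref{exhesav-e5} is precisely the expanded form of $1-(\xi-1)^{k+1}$. Once this identity is in hand, the order claim is immediate. First I would recall, exactly as in the first-order discussion following \eqref{exesav-first-e1} and reiterated at the start of this section, that $R$ and hence $\xi$ are treated to first order, so that $\xi^{n+1}=\xi(t^{n+1})+O(\Delta t)=1+O(\Delta t)$. Writing $\delta:=\xi^{n+1}-1$, this says $\delta=O(\Delta t)$, and the task becomes to show $1-U_k(\xi^{n+1})=O(\Delta t^{k+1})$.

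The key step is to verify the identity $U_k(\xi)=1-(\xi-1)^{k+1}$ for $k=1,2,3,4$ by the substitution $\xi=1+\delta$ and a short expansion. For example, for $U_1$ one finds $U_1(1+\delta)=(1+\delta)(1-\delta)=1-\delta^2$; for $U_2$ one factors $2-\xi=1-\delta$ and $\xi^2-\xi+1=1+\delta+\delta^2$, so $U_2(1+\delta)=(1-\delta)(1+\delta+\delta^2)=1-\delta^3$; and for $U_4$ the inner quartic collapses to $1+\delta+\delta^2+\delta^3+\delta^4$, giving the telescoping product $U_4(1+\delta)=(1-\delta)(1+\delta+\delta^2+\delta^3+\delta^4)=1-\delta^5$. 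The case $U_3$ is analogous, factoring as $(1-\delta^2)(1+\delta^2)=1-\delta^4$. In every case the expansion yields exactly $U_k(1+\delta)=1-\delta^{k+1}$, with no surviving lower-order terms.

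Finally I would substitute back: since $1-U_k(\xi^{n+1})=\delta^{k+1}=(\xi^{n+1}-1)^{k+1}$ and $\xi^{n+1}-1=O(\Delta t)$, we conclude $1-U_k(\xi^{n+1})=O(\Delta t^{k+1})$, which is precisely the assertion that $U_k(\xi^{n+1})$ is a $(k+1)$-th order approximation to $1$. The only real work is the bookkeeping of the four polynomial expansions, and the cleanest way to organize it is the shift $\xi=1+\delta$, which exhibits each $U_k$ as a telescoping product and makes the pattern $U_k(\xi)=1-(\xi-1)^{k+1}$ transparent; recognizing that common structure, rather than any analytic difficulty, is the main hurdle.
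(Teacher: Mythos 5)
Your proposal is correct and is essentially the paper's own argument: the paper likewise writes $\xi^{n+1}=1+C_1\Delta t$ and factors each $U_k$ into the same telescoping products $\bigl[1-(\xi-1)\bigr]\bigl[1+(\xi-1)+\cdots+(\xi-1)^k\bigr]$ (or the paired form for $U_3$), obtaining $U_k(\xi^{n+1})=1-C_1^{k+1}(\Delta t)^{k+1}$. The only cosmetic difference is that you state the unifying identity $U_k(\xi)=1-(\xi-1)^{k+1}$ explicitly, while the paper carries out the four expansions case by case with $C_1\Delta t$ in place of your $\delta$.
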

\begin{proof}
For $k=1$, $U_1(\xi)=1+O(\Delta t^2)$ has been proved in Section 2. Now we only shall the detailed proof for $k=2,3,4$. Noting that $\xi^{n+1}=1+C_1\Delta t$, then we have
\begin{equation*}
\aligned
U_2(\xi^{n+1})
&=(2-\xi^{n+1})[(\xi^{n+1})^2-\xi^{n+1}+1]\\
&=[1-(\xi^{n+1}-1)][(\xi^{n+1}-1)^2+(\xi^{n+1}-1)+1]\\
&=(1-C_1\Delta t)[(C_1\Delta t)^2+C_1\Delta t+1]\\
&=1-C_1^3(\Delta t)^3,
\endaligned
\end{equation*}
and for $k=3$, we have
\begin{equation*}
\aligned
U_3(\xi^{n+1})
&=\xi^{n+1}(2-\xi^{n+1})[(\xi^{n+1})^2-2\xi^{n+1}+2]\\
&=[1+(\xi^{n+1}-1)][1-(\xi^{n+1}-1)][(\xi^{n+1}-1)^2+1]\\
&=(1+C_1\Delta t)(1-C_1\Delta t)[1+(C_1\Delta t)^2]\\
&=[1-C_1^2(\Delta t)^2][1+C_1^2(\Delta t)^2]\\
&=1-C_1^4(\Delta t)^4,
\endaligned
\end{equation*}
and for $k=4$, we have
\begin{equation*}
\aligned
U_4(\xi^{n+1})
&=(2-\xi^{n+1})((\xi^{n+1})^4-3(\xi^{n+1})^3+4(\xi^{n+1})^2-2\xi^{n+1}+1)\\
&=[1-(\xi^{n+1}-1)][(\xi^{n+1}-1)^4+(\xi^{n+1}-1)^3+(\xi^{n+1}-1)^2+(\xi^{n+1}-1)+1]\\
&=(1-C_1\Delta t)[1+C_1\Delta t+(C_1\Delta t)^2+(C_1\Delta t)^3+(C_1\Delta t)^4]\\
&=1-C_1^5(\Delta t)^5,
\endaligned
\end{equation*}
which completes the proof.
\end{proof}

Using above results, we can directly observe that
\begin{equation}\label{exhesav-e6}
\displaystyle\frac{\alpha\phi^{n+1}-U_k(\xi^{n+1})\widehat{\phi}^n}{\Delta t}=\frac{\alpha\phi^{n+1}-\widehat{\phi}^n}{\Delta t}+\frac{[1-U_k(\xi^{n+1})]\widehat{\phi}^n}{\Delta t}=\left.\frac{\partial\phi}{\partial t}\right|^{n+1}+O(\Delta t^k).
\end{equation}
The E-SAV BDF$k$ schemes \eqref{exhesav-e1}-\eqref{exhesav-e4} also enjoy the same stability as the first-order scheme \eqref{exesav-first-e1}, namely, we can prove the following result using exactly the same procedure.
\begin{theorem}\label{exhesav-th3}
Given $R^n>0$, we then obtain $R^{n+1}>0$. The high-order E-SAV BDF$k$ schemes \eqref{exhesav-e1}-\eqref{exhesav-e4} are all unconditionally energy stable in the sense that
\begin{equation*}
\aligned
\displaystyle R^{n+1}=\frac{R^n}{1+\Delta t(\mathcal{G}\overline{\mu}^{n+1},\overline{\mu}^{n+1})}\leq R^n.
\endaligned
\end{equation*}
and more importantly we have
\begin{equation*}
\aligned
\ln R^{n+1}-\ln R^n\leq0.
\endaligned
\end{equation*}
\end{theorem}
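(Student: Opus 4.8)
The plan is to observe that the energy-stability conclusion depends only on the fourth (scalar) equation of the scheme, which is \emph{identical} across all the BDF$k$ schemes \eqref{exhesav-e1} and identical to the first-order scheme \eqref{exesav-first-e1}. The update rules for $\phi$, $\mu$ and the functional $U_k$ differ with $k$, but none of these quantities enters the stability estimate, so the argument used to prove Theorem \ref{exesav-th1} transfers essentially verbatim. Accordingly, I would not re-derive anything new; I would isolate the $R$-dynamics and replay the earlier proof.

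First I would extract the $R$-update from \eqref{exhesav-e1}, namely $\frac{R^{n+1}-R^n}{\Delta t}=-R^{n+1}(\mathcal{G}\overline{\mu}^{n+1},\overline{\mu}^{n+1})$, and solve for $R^{n+1}$ to obtain the closed form
\[
R^{n+1}=\frac{R^n}{1+\Delta t(\mathcal{G}\overline{\mu}^{n+1},\overline{\mu}^{n+1})}.
\]
For positivity I would argue by induction: with base case $R^0=\exp(E(\phi^0))>0$, assume $R^n>0$; since $\mathcal{G}$ is a positive operator we have $(\mathcal{G}\overline{\mu}^{n+1},\overline{\mu}^{n+1})\geq 0$, so the denominator is $\geq 1>0$ and hence $R^{n+1}>0$.

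For the dissipation estimate, the same bound $1+\Delta t(\mathcal{G}\overline{\mu}^{n+1},\overline{\mu}^{n+1})\geq 1$ applied to the closed form gives $R^{n+1}\leq R^n$ directly. Finally, because $y=\ln(x)$ is strictly monotone increasing and $R^n,R^{n+1}>0$, applying $\ln$ preserves the inequality and yields $\ln R^{n+1}-\ln R^n\leq 0$, which is exactly the dissipation of the modified energy $\ln R^n=E$.

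The only point worth emphasizing — rather than a genuine obstacle — is \emph{why} the higher-order and fully implicit treatment of $U_k$ leave stability untouched: the scalar variable $R$ obeys a first-order dissipative discretization whose right-hand side is $-R^{n+1}(\mathcal{G}\overline{\mu}^{n+1},\overline{\mu}^{n+1})$ \emph{regardless of $k$}, while the functionals $U_k(\xi)$ appear only in the $\phi$- and $\mu$-equations, where (via the preceding Lemma) they serve solely to secure $k$-th order accuracy, not stability. Consequently there is nothing to verify beyond repeating the first-order argument, which is precisely the claim that the result follows ``using exactly the same procedure.''
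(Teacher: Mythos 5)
Your proposal is correct and follows exactly the route the paper intends: the paper gives no separate proof for this theorem, stating only that it follows ``using exactly the same procedure'' as Theorem \ref{exesav-th1}, and your argument is precisely that procedure (closed-form solve of the $R$-update, induction for positivity using $(\mathcal{G}\overline{\mu}^{n+1},\overline{\mu}^{n+1})\geq 0$, the denominator bound $\geq 1$ for dissipation, and monotonicity of $\ln$). Your added remark that the $U_k$ and BDF$k$ structure never enter the scalar $R$-equation is exactly the unstated justification the paper relies on, so there is nothing to correct.
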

\begin{remark}\label{exesav-re1}
To prevent the solution "blowing up" because of the exponential function increasing rapidly, we can add a positive constant $S$ to redefine the exponential scalar auxiliary variable:
\begin{equation*}
\aligned
R(t)=\exp\left(\frac{E(\phi)}{S}\right)=\exp\left(\frac{1}{S}(\phi,\mathcal{L}\phi)+\frac{1}{S}\int_\Omega F(\phi)d\textbf{x}\right).
\endaligned
\end{equation*}
The energy dissipation law is also keep original at the continuous level although a positive constant $S$ is added to the variable $R$:
\begin{equation*}
\frac{dE}{dt}=\frac{Sd\ln(R)}{dt}=\frac{S}{R}\frac{dR}{dt}=-(\mathcal{G}\mu,\mu)\leq0.
\end{equation*}
\end{remark}
\section{The E-SAV approach with relaxation}
In the new E-SAV scheme \eqref{exhesav-e1}, notice that $\xi(t)\equiv1$ at the continuous level because of $R(t)=\exp{E(\phi)}$. Hence, the the modified energy $\ln R(t)$ for the equivalent model \eqref{exesav-equation} and the original energy $E(\phi)$ are equal in the PDE level. However, the numerical results of $R(t)$ and $\exp{E(\phi)}$ are not equal anymore, which means the discrete energies $\ln R^{n+1}$ and $E(\phi^{n+1})$ are not equivalent anymore. Inspired by the R-SAV approach
in \cite{jiang2022improving}, we construct the following E-SAV approach with relaxation (RE-SAV), which not only inherits all the advantages of the new E-SAV approach, but can also significantly improve its accuracy.

If we combine the relaxed technique with the considered semi-implicit BDF$k$ time marching method in \eqref{exhesav-e1}, we have the following high order RE-SAV BDF$k$ scheme.

\textbf{Step I}: Compute $\phi^{n+1}$ and $\widetilde{R}^{n+1}$ by the following semi-implicit E-SAV BDF$k$ scheme:
\begin{equation}\label{resav-e1}
   \begin{array}{l}
\displaystyle\frac{\alpha\overline{\phi}^{n+1}-\widehat{\phi}^{n}}{\Delta t}=-\mathcal{G}\mathcal{L}\overline{\phi}^{n+1}-\mathcal{G}F'(\phi^{\ast,n+1}),\\
\displaystyle\frac{\widetilde{R}^{n+1}-R^n}{\Delta t}=-\widetilde{R}^{n+1}(\mathcal{G}\overline{\mu}^{n+1},\overline{\mu}^{n+1}),\\
\xi^{n+1}=\displaystyle\frac{\widetilde{R}^{n+1}}{\exp(E(\overline{\phi}^{n+1}))},\\
\phi^{n+1}=U(\xi^{n+1})\overline{\phi}^{n+1},
   \end{array}
\end{equation}
where $\overline{\mu}^{n+1}=\displaystyle\mathcal{L}\overline{\phi}^{n+1}+F'(\overline{\phi}^{n+1})$. $\alpha$, $\widehat{\phi}^{n}$, $\phi^{\ast,n+1}$ and $U_k(\xi)$ in above equations can be founded in \eqref{exhesav-e2}-\eqref{exhesav-e5}.

\textbf{Step II}: Update the scalar auxiliary variable $R^{n+1}$ via a relaxation step as
\begin{equation}\label{resav-e2}
R^{n+1}=\lambda_0\widetilde{R}^{n+1}+(1-\lambda_0)\exp\left(E(\phi^{n+1})\right),\quad \lambda_0\in\mathcal{V}.
\end{equation}
Here is $\mathcal{V}$ a set defined by $\mathcal{V}=\mathcal{V}_1\cap\mathcal{V}_2$, where
\begin{equation}\label{resav-e3}
\mathcal{V}_1=\{\lambda|\lambda\in[0,1]\},
\end{equation}
\begin{equation}\label{resav-e4}
\mathcal{V}_2=\left\{\lambda|R^{n+1}-\widetilde{R}^{n+1}\leq\frac{\Delta t\kappa(\mathcal{G}\overline{\mu}^{n+1},\overline{\mu}^{n+1})R^n}{1+\Delta t(\mathcal{G}\overline{\mu}^{n+1},\overline{\mu}^{n+1})},\quad R^{n+1}=\lambda\widetilde{R}^{n+1}+(1-\lambda)\exp\left(E(\phi^{n+1})\right)\right\}.
\end{equation}
Here, $\kappa\in[0,1]$ is an artificial parameter that can be manually assigned.

The set $\mathcal{V}_2$ in \eqref{resav-e4} can be simplified as
\begin{equation}\label{resav-e5}
\mathcal{V}_2=\left\{\lambda|\left[\widetilde{R}^{n+1}-\exp\left(E(\phi^{n+1})\right)\right]\lambda\leq\left[\widetilde{R}^{n+1}-\exp\left(E(\phi^{n+1})\right)\right]+\frac{\Delta t\kappa(\mathcal{G}\overline{\mu}^{n+1},\overline{\mu}^{n+1})R^n}{1+\Delta t(\mathcal{G}\overline{\mu}^{n+1},\overline{\mu}^{n+1})}\right\}.
\end{equation}

Firstly, notice the fact $\Delta t(\mathcal{G}\overline{\mu}^{n+1},\overline{\mu}^{n+1})\geq0$ and $R^n>0$. Then, it is obviously to see that $1\in\mathcal{V}$ which means the set $\mathcal{V}$ is non-empty. Secondly, the optimal relaxation parameter $\lambda_0$ can be chosen as follows:
the optimal relaxation parameter $\lambda_0$ can be chosen as a solution of the following optimization problem:
\begin{equation}\label{resav-e6}
\aligned
\lambda_0=\min\limits_{\lambda\in[0,1]}\lambda\quad s.t.~\left[\widetilde{R}^{n+1}-\exp\left(E(\phi^{n+1})\right)\right]\lambda\leq\left[\widetilde{R}^{n+1}-\exp\left(E(\phi^{n+1})\right)\right]+\frac{\Delta t\kappa(\mathcal{G}\overline{\mu}^{n+1},\overline{\mu}^{n+1})R^n}{1+\Delta t(\mathcal{G}\overline{\mu}^{n+1},\overline{\mu}^{n+1})}.
\endaligned
\end{equation}

(1) if $\widetilde{R}^{n+1}<\exp\left(E(\phi^{n+1})\right)$, the inequality in \eqref{resav-e6} will be simplified as
\begin{equation*}
\aligned
\lambda\geq1+\frac{\Delta t\kappa(\mathcal{G}\overline{\mu}^{n+1},\overline{\mu}^{n+1})R^n}{\left[1+\Delta t(\mathcal{G}\overline{\mu}^{n+1},\overline{\mu}^{n+1})\right]\left[\widetilde{R}^{n+1}-\exp\left(E(\phi^{n+1})\right)\right]}.
\endaligned
\end{equation*}
which means $\lambda\geq1$. Then, the optimization problem in \eqref{resav-e6} can be solved as
\begin{equation}\label{resav-e7}
\aligned
\lambda_0=\max\left\{0,1-a\right\},
\endaligned
\end{equation}
where $a=\frac{\Delta t\kappa(\mathcal{G}\overline{\mu}^{n+1},\overline{\mu}^{n+1})R^n}{\left[1+\Delta t(\mathcal{G}\overline{\mu}^{n+1},\overline{\mu}^{n+1})\right]\left|\widetilde{R}^{n+1}-\exp\left(E(\phi^{n+1})\right)\right|}.$

(2) if $\widetilde{R}^{n+1}=\exp\left(E(\phi^{n+1})\right)$, any arbitrary parameter $\lambda$ between $0$ and $1$ will satisfy the inequality in \eqref{resav-e6}. Thus, we have $\lambda_0=\min\left[0,1\right]=0$.

(3) if $\widetilde{R}^{n+1}>\exp\left(E(\phi^{n+1})\right)$, The inequality in \eqref{resav-e6} will be simplified as
\begin{equation*}
\aligned
\lambda\leq1\leq1+a.
\endaligned
\end{equation*}
which means $\lambda$ can be any arbitrary parameter between $0$ and $1$. Thus, we also have $\lambda_0=\min\left[0,1\right]=0$.

In summary, we can choose the optimal relaxation parameter $\lambda_0$ as follows:
\begin{remark}\label{resav-re1}
If $\widetilde{R}^{n+1}<\exp\left(E(\phi^{n+1})\right)$, then we define $a=\frac{\Delta t\kappa(\mathcal{G}\overline{\mu}^{n+1},\overline{\mu}^{n+1})R^n}{\left[1+\Delta t(\mathcal{G}\overline{\mu}^{n+1},\overline{\mu}^{n+1})\right]\left|\widetilde{R}^{n+1}-\exp\left(E(\phi^{n+1})\right)\right|}.$ Then the optimal relaxation parameter $\lambda_0$ can be solved as
\begin{equation}\label{resav-e8}
\aligned
\lambda_0=\left\{
   \begin{array}{rll}
\max\left\{0,1-a\right\},&&\widetilde{R}^{n+1}<\exp\left(E(\phi^{n+1})\right),\\
0,&&\widetilde{R}^{n+1}\geq\exp\left(E(\phi^{n+1})\right).
   \end{array}
   \right.
\endaligned
\end{equation}
\end{remark}
\begin{theorem}\label{resav-th1}
Given $R^n>0$, we then obtain $R^{n+1}>0$. The RE-SAV BDF$k$ scheme \eqref{resav-e1}-\eqref{resav-e2} is unconditionally energy stable in the sense that
\begin{equation*}
\aligned
\displaystyle R^{n+1}\leq\widetilde{R}^{n+1}+\frac{R^n}{1+\Delta t(\mathcal{G}\overline{\mu}^{n+1},\overline{\mu}^{n+1})}\leq R^n.
\endaligned
\end{equation*}
and more importantly we have
\begin{equation*}
\aligned
\ln R^{n+1}-\ln R^n\leq0.
\endaligned
\end{equation*}
\end{theorem}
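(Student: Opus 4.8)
The plan is to dispatch the three assertions of Theorem~\ref{resav-th1} in turn --- positivity of $R^{n+1}$, the two-sided energy bound, and the logarithmic dissipation --- each of which follows almost immediately once the algebraic content of the relaxation set $\mathcal{V}_2$ is unpacked. First I would establish positivity by induction. Given $R^n>0$, the second equation in Step~I \eqref{resav-e1} is solved explicitly for $\widetilde{R}^{n+1}$, giving $\widetilde{R}^{n+1}=R^n/[1+\Delta t(\mathcal{G}\overline{\mu}^{n+1},\overline{\mu}^{n+1})]$; since $\mathcal{G}$ is a positive operator we have $(\mathcal{G}\overline{\mu}^{n+1},\overline{\mu}^{n+1})\ge0$, so $\widetilde{R}^{n+1}>0$. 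The relaxation formula \eqref{resav-e2} then expresses $R^{n+1}$ as the convex combination $\lambda_0\widetilde{R}^{n+1}+(1-\lambda_0)\exp(E(\phi^{n+1}))$ with $\lambda_0\in\mathcal{V}_1=[0,1]$; because $\exp(E(\phi^{n+1}))>0$ for every argument and $\widetilde{R}^{n+1}>0$, the combination is strictly positive, whence $R^{n+1}>0$.

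For the energy inequality the crucial observation is that $\lambda_0\in\mathcal{V}\subseteq\mathcal{V}_2$, so the membership constraint defining $\mathcal{V}_2$ in \eqref{resav-e4} gives at once
\begin{equation*}
R^{n+1}-\widetilde{R}^{n+1}\le\frac{\Delta t\kappa(\mathcal{G}\overline{\mu}^{n+1},\overline{\mu}^{n+1})R^n}{1+\Delta t(\mathcal{G}\overline{\mu}^{n+1},\overline{\mu}^{n+1})}.
\end{equation*}
Adding $\widetilde{R}^{n+1}$ to both sides, substituting its explicit value, and combining the two fractions over the common denominator $1+\Delta t(\mathcal{G}\overline{\mu}^{n+1},\overline{\mu}^{n+1})$ yields
\begin{equation*}
R^{n+1}\le\widetilde{R}^{n+1}+\frac{\Delta t\kappa(\mathcal{G}\overline{\mu}^{n+1},\overline{\mu}^{n+1})R^n}{1+\Delta t(\mathcal{G}\overline{\mu}^{n+1},\overline{\mu}^{n+1})}=\frac{\left[1+\kappa\,\Delta t(\mathcal{G}\overline{\mu}^{n+1},\overline{\mu}^{n+1})\right]R^n}{1+\Delta t(\mathcal{G}\overline{\mu}^{n+1},\overline{\mu}^{n+1})}.
\end{equation*}
Since $\kappa\in[0,1]$ the numerator is bounded above by the denominator multiplied by $R^n$, so the right-hand side is $\le R^n$; this produces the second inequality, while the displayed equality before invoking $\kappa\le1$ is exactly the middle term of the theorem.

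Finally, the logarithmic statement follows from monotonicity: having shown $0<R^{n+1}\le R^n$, strict increase of $\ln$ gives $\ln R^{n+1}-\ln R^n\le0$, precisely as in the proof of Theorem~\ref{exhesav-th3}. I do not expect a genuine obstacle, since the set $\mathcal{V}_2$ was engineered so that its membership constraint reproduces the energy bound; the only care needed is (i) to note that $\mathcal{V}$ is non-empty --- already observed via $1\in\mathcal{V}$ --- so that the optimal $\lambda_0$ of \eqref{resav-e8} indeed exists, and (ii) the routine simplification using $\kappa\le1$ that collapses the $\mathcal{V}_2$ inequality to $R^{n+1}\le R^n$. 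The one point worth flagging is that the middle term displayed in the theorem should carry the factor $\kappa\,\Delta t(\mathcal{G}\overline{\mu}^{n+1},\overline{\mu}^{n+1})$ in the numerator of its second summand for the chain of inequalities to close.
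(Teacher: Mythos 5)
Your proof is correct and follows essentially the same route as the paper's: the explicit solve for $\widetilde{R}^{n+1}$ plus convexity of the relaxation for positivity, the $\mathcal{V}_2$ membership constraint combined over a common denominator and closed with $\kappa\le1$ for the energy bound, and strict monotonicity of $\ln$ for the final claim. Your closing observation is also right: the middle term of the theorem as printed is missing the factor $\Delta t\,\kappa(\mathcal{G}\overline{\mu}^{n+1},\overline{\mu}^{n+1})$ in the numerator of its second summand (as written it would force $\Delta t(\mathcal{G}\overline{\mu}^{n+1},\overline{\mu}^{n+1})\ge1$), and the paper's own chain of inequalities \eqref{resav-e11} carries exactly the factor you supply, so the printed statement contains a typo that your version corrects.
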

\begin{proof}
For the first step of the RE-SAV scheme \eqref{resav-e1} and the Theorem \ref{exhesav-th3}, we could get
\begin{equation}\label{resav-e9}
\aligned
\displaystyle \widetilde{R}^{n+1}=\frac{R^n}{1+\Delta t(\mathcal{G}\overline{\mu}^{n+1},\overline{\mu}^{n+1})}.
\endaligned
\end{equation}
Then for $R^n>0$, we obtain $\widetilde{R}^{n+1}>0$. Considering that $R^{n+1}=\lambda\widetilde{R}^{n+1}+(1-\lambda)\exp\left(E(\phi^{n+1})\right)$, we immediately have $R^{n+1}>0$.

From the constraint condition in \eqref{resav-e4}, we could obtain
\begin{equation}\label{resav-e10}
R^{n+1}-\widetilde{R}^{n+1}\leq\frac{\Delta t\kappa(\mathcal{G}\overline{\mu}^{n+1},\overline{\mu}^{n+1})R^n}{1+\Delta t(\mathcal{G}\overline{\mu}^{n+1},\overline{\mu}^{n+1})}.
\end{equation}
Combining the above two inequalities \eqref{resav-e9}-\eqref{resav-e10} and noting $\kappa\in[0,1]$, we could have
\begin{equation}\label{resav-e11}
\aligned
R^{n+1}
&\leq\widetilde{R}^{n+1}+\frac{\Delta t\kappa(\mathcal{G}\overline{\mu}^{n+1},\overline{\mu}^{n+1})R^n}{1+\Delta t(\mathcal{G}\overline{\mu}^{n+1},\overline{\mu}^{n+1})}\\
&=\frac{R^n}{1+\Delta t(\mathcal{G}\overline{\mu}^{n+1},\overline{\mu}^{n+1})}+\frac{\Delta t\kappa(\mathcal{G}\overline{\mu}^{n+1},\overline{\mu}^{n+1})R^n}{1+\Delta t(\mathcal{G}\overline{\mu}^{n+1},\overline{\mu}^{n+1})}\\
&=\frac{1+\Delta t\kappa(\mathcal{G}\overline{\mu}^{n+1},\overline{\mu}^{n+1})}{1+\Delta t(\mathcal{G}\overline{\mu}^{n+1},\overline{\mu}^{n+1})}R^n\\
&\leq R^n.
\endaligned
\end{equation}
Noting that the logarithm function $y=\ln(x)$ is a strictly monotone increasing function and $R^n>0$, we can also obtain the following energy stability:
\begin{equation*}
\aligned
\ln R^{n+1}-\ln R^n\leq0.
\endaligned
\end{equation*}
\end{proof}

\begin{remark}\label{resav-re2}
In numerical simulation by using the new E-SAV approach, if we set $S=1$ and use a big time step, the error between $R^{n+1}$ and $\exp(E(\phi^{n+1}))$ may increase rapidly because of the exponential growth. The proposed RE-SAV approach is a good way to control the error between $R^{n+1}$ and $\exp(E(\phi^{n+1}))$. The relaxation technique guarantees that $R^{n+1}$ is a good approximation of $\exp(E(\phi^{n+1}))$.
\end{remark}

Actually, the high-order RE-SAV BDF$k$ scheme \eqref{resav-e1}-\eqref{resav-e2} can be divided into the following three steps:

\textbf{Step I}: Compute $\overline{\phi}^{n+1}$ by the following semi-implicit BDF$k$ scheme:
\begin{equation}\label{resav-e12}
   \begin{array}{l}
\displaystyle\frac{\alpha\overline{\phi}^{n+1}-\widehat{\phi}^{n}}{\Delta t}=-\mathcal{G}\mathcal{L}\overline{\phi}^{n+1}-\mathcal{G}F'(\phi^{\ast,n+1}).
   \end{array}
\end{equation}

\textbf{Step II}: Compute $\widetilde{R}^{n+1}$ and $\xi^{n+1}$ by the known $\overline{\phi}^{n+1}$ and the following scheme:
\begin{equation}\label{resav-e13}
\displaystyle\frac{\widetilde{R}^{n+1}-R^n}{\Delta t}=-\widetilde{R}^{n+1}(\mathcal{G}\overline{\mu}^{n+1},\overline{\mu}^{n+1}),\quad\xi^{n+1}=\displaystyle\frac{\widetilde{R}^{n+1}}{\exp(E(\overline{\phi}^{n+1}))}.
\end{equation}

\textbf{Step III}: Using $\overline{\phi}^{n+1}$ and $\xi^{n+1}$ to obtain a modified $\phi^{n+1}$:
\begin{equation}\label{resav-e14}
\phi^{n+1}=U(\xi^{n+1})\overline{\phi}^{n+1}.
\end{equation}

\textbf{Step IV}: Update $R^{n+1}$ by the known $\widetilde{R}^{n+1}$ and $\phi^{n+1}$ via a relaxation step as
\begin{equation}\label{resav-e15}
R^{n+1}=\lambda_0\widetilde{R}^{n+1}+(1-\lambda_0)\exp\left(E(\phi^{n+1})\right).
\end{equation}

\section{Examples and discussion}
In this section, several numerical examples are given to demonstrate the accuracy, energy stability and efficiency of the proposed E-SAV and RE-SAV schemes when applying to some classical gradient flow models such as the Allen-Cahn, Cahn-Hilliard and Swift-Hohenberg model. In the following examples, we consider the periodic boundary conditions and use a Fourier spectral method in space. A comparative study in accuracy of the traditional SAV scheme in \cite{shen2018scalar}, the new E-SAV scheme with no constant $S$ (E-SAV1), the new E-SAV scheme with $S=10$ (E-SAV2) and RE-SAV scheme are considered to show the accuracy and efficiency.

\subsection{Cahn-Hilliard and Allen-Cahn models}
As we all know, Cahn-Hilliard and Allen-Cahn models are very classical phase field models and have been widely used in many fields involving physics, materials science, finance and image processing \cite{chen2018accurate,chen2018power,du2018stabilized}.

Consider the following gradient flow:
\begin{equation}\label{section5_e_model}
  \left\{
   \begin{array}{rlr}
\displaystyle\frac{\partial \phi}{\partial t}&=-\mathcal{G}\mu,     &(\textbf{x},t)\in\Omega\times J,\\
                                          \mu&=-\epsilon^2\Delta \phi+F'(\phi),&(\textbf{x},t)\in\Omega\times J,
   \end{array}
   \right.
\end{equation}
where $J=(0,T]$, $\mu=\frac{\delta E}{\delta \phi}$ is the chemical potential and the energy $E$ is the following Lyapunov energy functional:
\begin{equation}\label{section5_energy1}
E(\phi)=\int_{\Omega}(\frac{\epsilon^2}{2}|\nabla \phi|^2+F(\phi))d\textbf{x},
\end{equation}
where the most commonly used form Ginzburg-Landau double-well type potential is defined as $F(\phi)=\frac{1}{4}(\phi^2-1)^2$. In general, if $\mathcal{G}=I$, the above model will be Allen-Cahn model. We set $\mathcal{G}=-\Delta$, the above model will be Cahn-Hilliard model.

\textbf{Example 1}: Consider the above gradient flow in $\Omega=[0,2\pi]^2$, and the following initial condition \cite{ShenA}:
\begin{equation*}
\aligned
\phi(x,y,0)=0.5\cos(x)\cos(y).
\endaligned
\end{equation*}

For Allen-Cahn model, we adopt $(N_x,N_y)=(256,256)$, and set model parameters $T=1$, $\epsilon=0.01$ and numerical parameters $C=1$ in SAV scheme, $S=10$ in E-SAV2 scheme, $\kappa=1$ in RE-SAV scheme. We use the first-order accurate time discrete schemes for all considered numerical approaches. The computational error and convergence rates are shown in Table \ref{tab:tab1}. We observe that all convergence rates for the listed four schemes are consistent with the theoretical results. For the E-SAV1 scheme, the exponential growth affects the precision of numerical solution. An introducing constant $S=10$ for E-SAV2 scheme is efficient to improve the accuracy. The RE-SAV approach is also a good way to improve the accuracy by controlling the error between $R^{n+1}$ and $\exp(E(\phi^{n+1}))$. The numerical results for the RE-SAV BDF$k$ $(k=1,2,3,4)$ schemes are also given in Table \ref{tab:tab2}, where we can observe the expected convergence rate of the field variable $\phi$.
\begin{table}[h!b!p!]
\small
\centering
\caption{\small The $L^\infty$ errors, convergence rates for first-order scheme in time of the SAV¡¢E-SAV1¡¢E-SAV2¡¢RE-SAV approaches for Allen-Cahn equation. }\label{tab:tab1}
\begin{tabular}{|c|c|c|c|c|c|c|c|c|}
\hline
&\multicolumn{2}{c|}{SAV}&\multicolumn{2}{c|}{E-SAV1}&\multicolumn{2}{c|}{E-SAV2}&\multicolumn{2}{c|}{RE-SAV}\\
\cline{1-9}
$\Delta t$&Error&Rate&Error&Rate&Error&Rate&Error&Rate\\
\cline{1-9}
$\frac{1}{10}$      &6.9550e-2   &---   &1.4013e-1   &---    &5.0192e-2   &---   &6.3217e-2   &---   \\
$\frac{1}{20}$      &3.5750e-2   &0.9601&6.9572e-2   &1.0102 &2.5494e-2   &0.9773&2.7020e-2   &1.2262\\
$\frac{1}{40}$      &1.8114e-2   &0.9808&3.4663e-2   &1.0051 &1.2831e-2   &0.9905&1.2934e-2   &1.0628\\
$\frac{1}{80}$      &9.0998e-3   &0.9932&1.7289e-2   &1.0035 &6.4176e-3   &0.9995&6.3816e-3   &1.0192\\
$\frac{1}{160}$     &4.5421e-3   &1.0025&8.6193e-3   &1.0042 &3.1901e-3   &1.0084&3.1607e-3   &1.0137\\
\hline
\end{tabular}
\end{table}
An energies comparison of the considered SAV, E-SAV1, E-SAV2 and RE-SAV methods in solving the Allen-Cahn equation is shown in Figure \ref{fig:fig0} and Figure \ref{fig:fig01}. From Figure \ref{fig:fig0}(a) and Figure \ref{fig:fig01}(a), we find that the RE-SAV scheme provides accurate result than the SAV scheme. From Figure \ref{fig:fig0}(b) and Figure \ref{fig:fig01}(b), we observe that the RE-SAV scheme provides less error between $R(t)$ and $\exp(E(\phi))$ than the new E-SAV scheme. It means the relaxation step increases the numerical accuracy and guarantees the numerical consistency between $R(t)$ and $\exp(E(\phi))$.
\begin{figure}[htp]
\centering
\subfigure[Log-log plot for the energy evolution]{
\includegraphics[width=8cm,height=8cm]{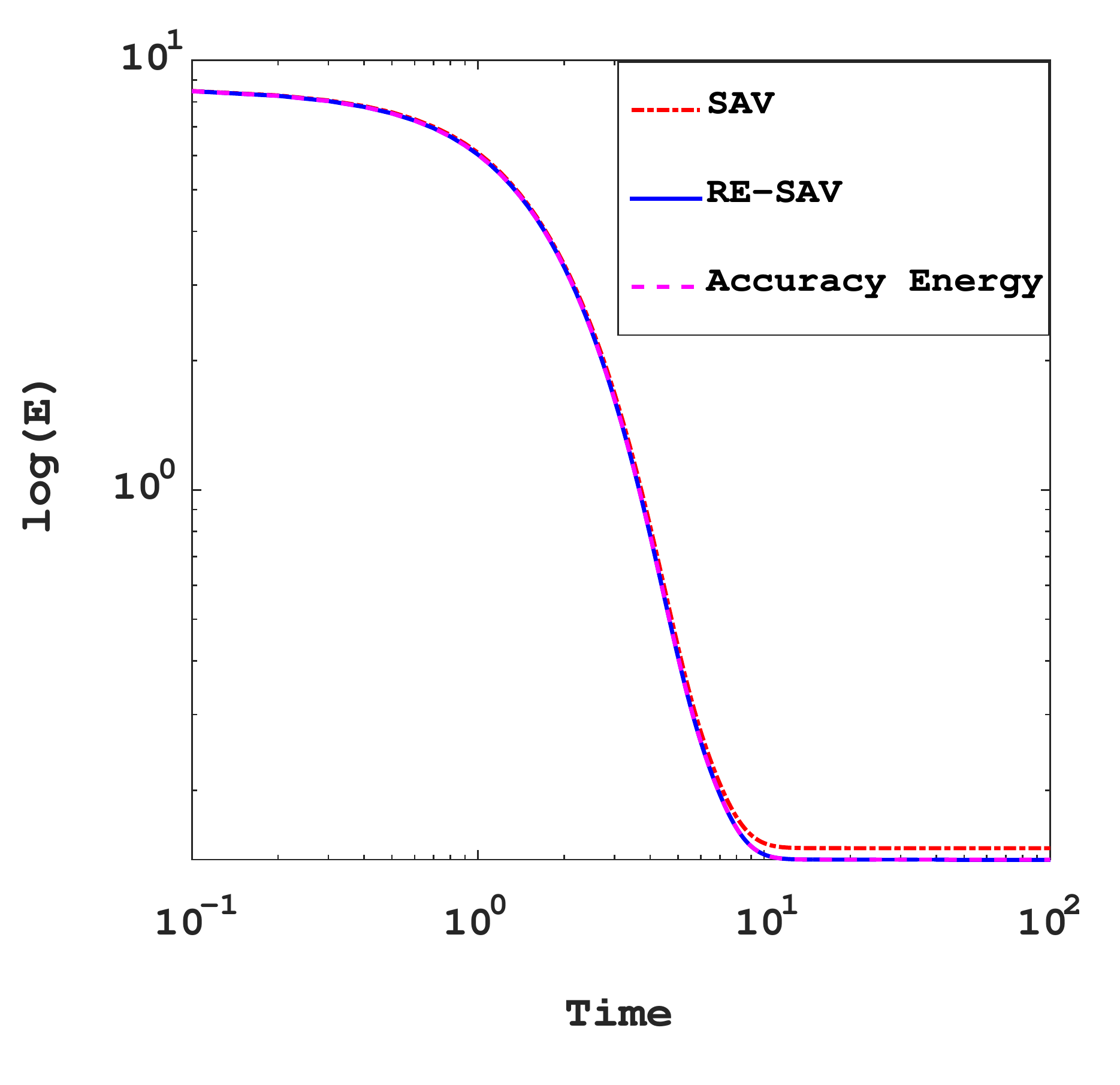}}
\subfigure[Log-log plot for the E-SAV $R(t)$ evolution]{
\includegraphics[width=8cm,height=8cm]{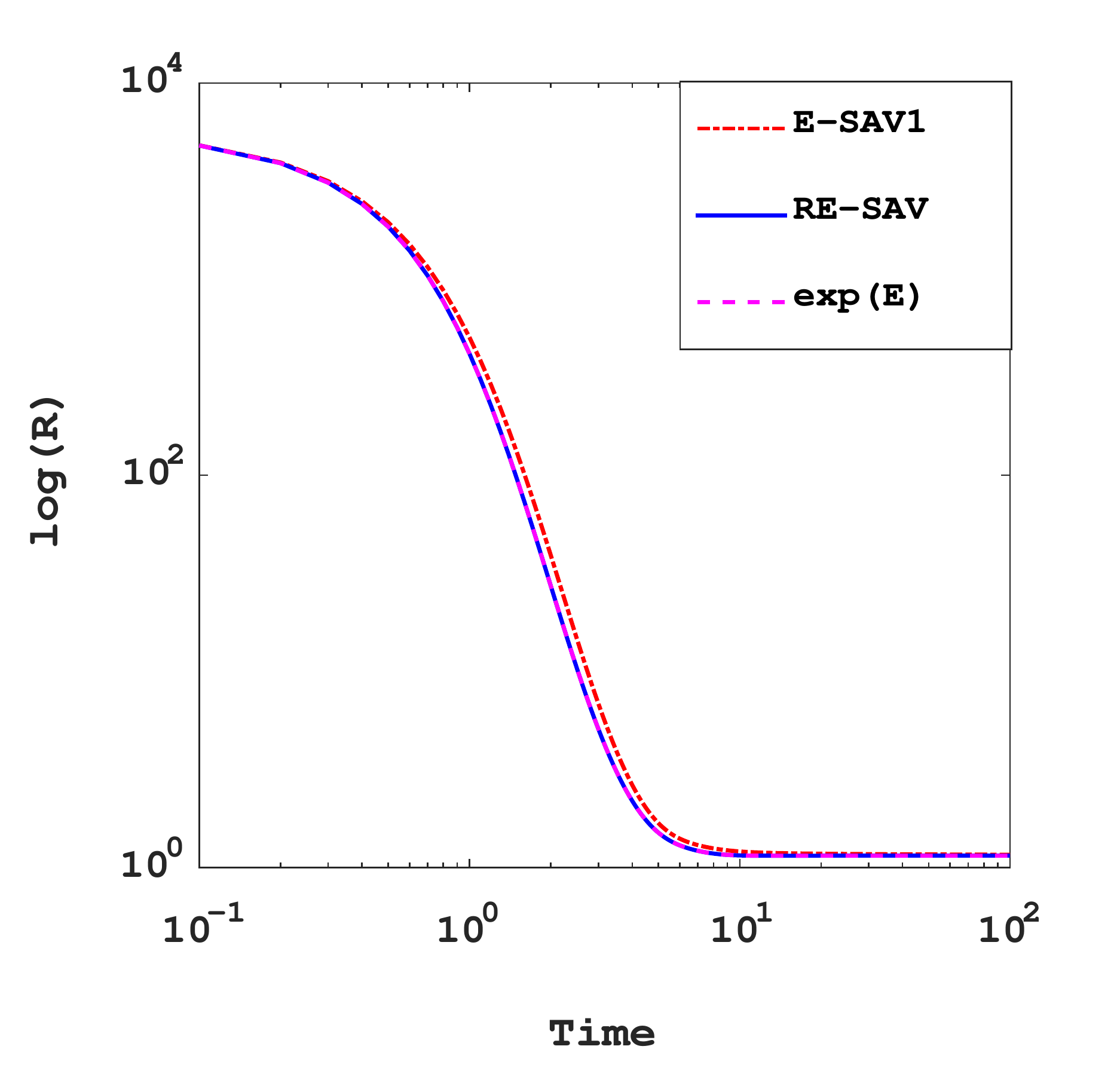}}
\caption{A comparison of the SAV, E-SAV1 and RE-SAV methods in solving the Allen-Cahn equation. (a) the numerical energies
using the first-order SAV and the RE-SAV schemes with $\Delta t=0.01$. (b) the introducing E-SAV $R(t)$ for the E-SAV1 and RE-SAV schemes with $\Delta t=0.1$.}\label{fig:fig0}
\end{figure}
\begin{figure}[htp]
\centering
\subfigure[Numerical results of $\mathcal{E}-E(\phi)$]{
\includegraphics[width=8cm,height=8cm]{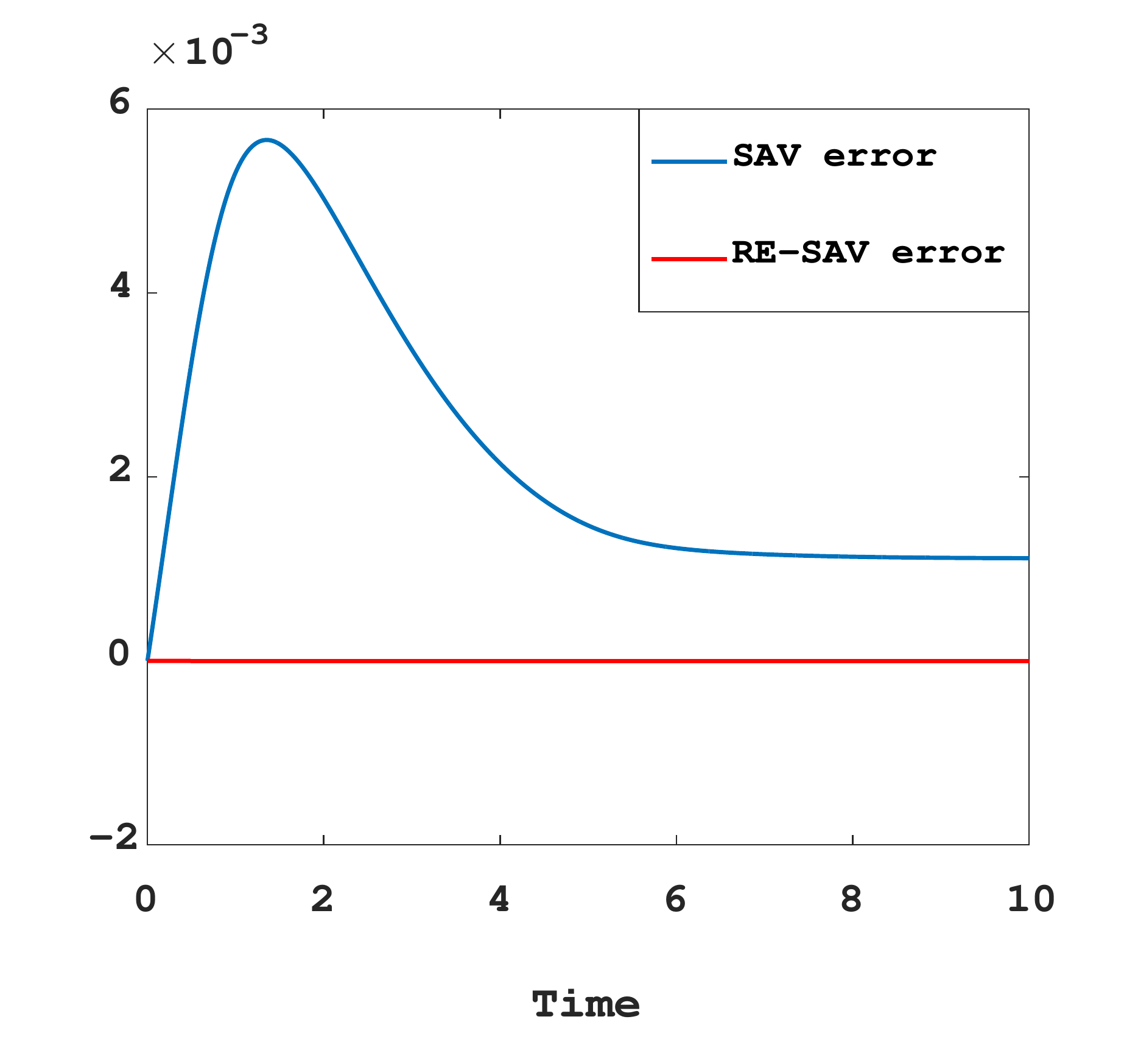}}
\subfigure[Numerical results of $R(t)-\exp(E(\phi/S))$]{
\includegraphics[width=8cm,height=8cm]{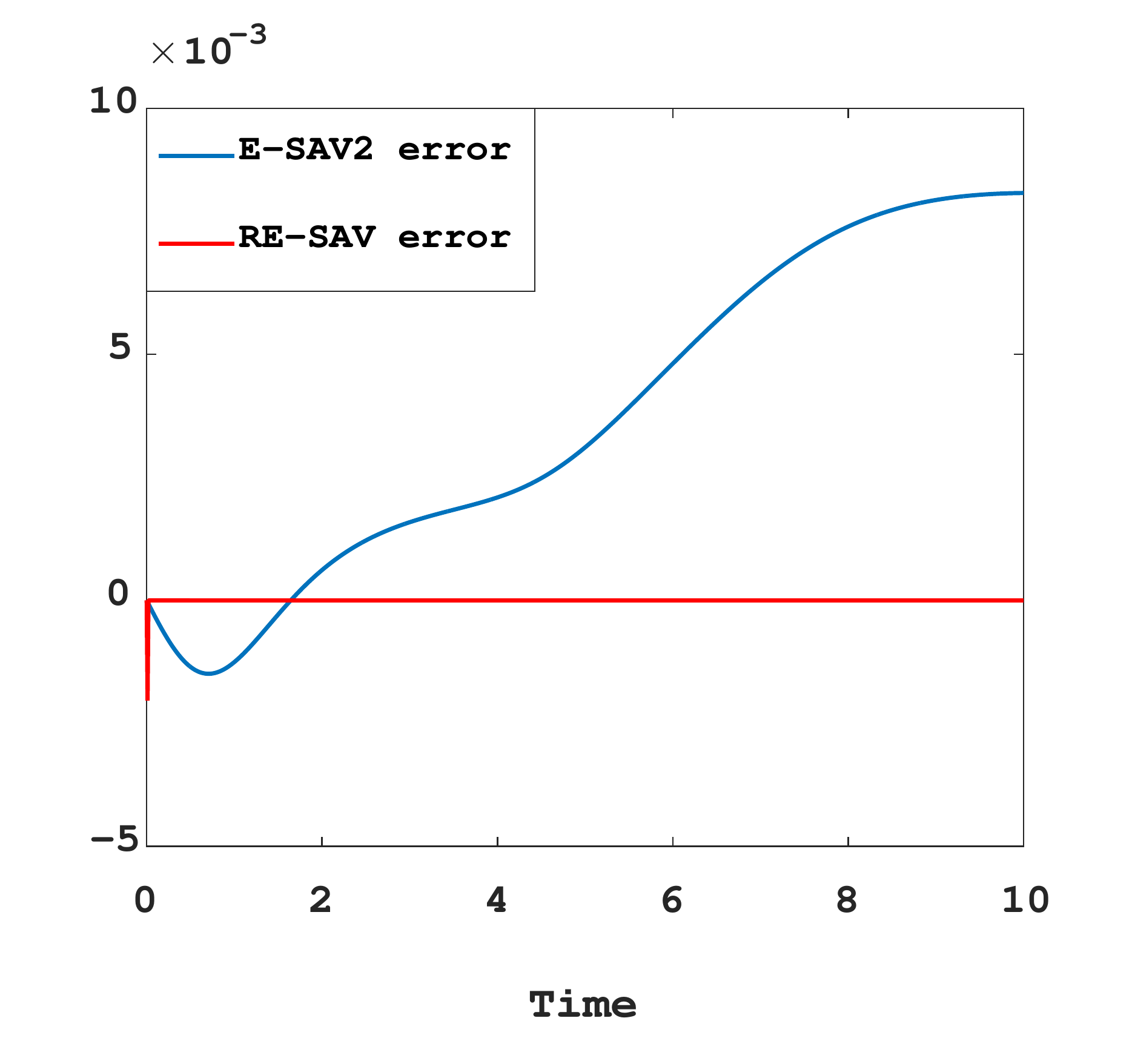}}
\caption{A comparison of the SAV, E-SAV2 and RE-SAV methods in solving the Allen-Cahn equation. (a) Numerical results of $\mathcal{E}-E(\phi)$ using the first-order SAV and the RE-SAV schemes with $\Delta t=0.01$. (b) Numerical results of $R(t)-\exp(E(\phi/S))$ for the E-SAV2 and RE-SAV schemes with $\Delta t=0.01$.}\label{fig:fig01}
\end{figure}

For Cahn-Hilliard model, we adopt uniform meshes $N_x=N_y=256$ and $T=1$, $\epsilon^2=0.16$. Given the analytical solutions are unknown, we calculate the error as the difference between the numerical solutions using the current time step and the numerical solutions using the adjacent finer time step $\Delta t_{ref}=0.0001$. Table \ref{tab:tab3} shows the results of the errors and convergence rates for the RE-SAV BDF$k$ $(k=1,2,3,4)$ scheme. Numerical results demonstrate the accuracy and efficiency of our proposed scheme.
\begin{table}[h!b!p!]
\small
\centering
\caption{\small The $L^\infty$ errors, convergence rates for the RE-SAV BDF$k$ $(k=1,2,3,4)$ schemes of Allen-Cahn equation.}\label{tab:tab2}
\begin{tabular}{|c|c|c|c|c|c|c|c|c|}
\hline
RE-SAV&\multicolumn{2}{c|}{BDF1}&\multicolumn{2}{c|}{BDF2}&\multicolumn{2}{c|}{BDF3}&\multicolumn{2}{c|}{BDF4}\\
\cline{1-9}
$\Delta t$&Error&Rate&Error&Rate&Error&Rate&Error&Rate\\
\cline{1-9}
$\frac14$      &4.6900e-2   &---   &5.0622e-2   &---    &1.0612e-2   &---   &2.6589e-3   &---   \\
$\frac18$      &1.3850e-2   &1.7597&1.5827e-2   &1.6774 &1.8173e-3   &2.5458&3.3441e-4   &2.9911\\
$\frac{1}{16}$ &5.4740e-3   &1.3392&4.3988e-3   &1.8472 &2.6317e-4   &2.7877&2.0908e-5   &3.9949\\
$\frac{1}{32}$ &2.5213e-3   &1.1184&1.1561e-3   &1.9278 &3.5325e-5   &2.8972&1.4242e-6   &3.8758\\
$\frac{1}{64}$ &1.1999e-3   &1.0713&2.9531e-4   &1.9690 &4.5726e-6   &2.9496&9.4212e-8   &3.9181\\
$\frac{1}{128}$&5.5727e-4   &1.1064&7.3797e-5   &2.0006 &5.8058e-7   &2.9745&6.0654e-9   &3.9572\\
\hline
\end{tabular}
\end{table}
\begin{table}[h!b!p!]
\small
\centering
\caption{\small The $L^\infty$ errors, convergence rates for the BDF$k$ $(k=1,2,3,4)$ modified E-SAV schemes of Cahn-Hilliard equation with $T=1$, $\epsilon^2=0.16$ and $\Delta t_{ref}=0.0001$. }\label{tab:tab3}
\begin{tabular}{|c|c|c|c|c|c|c|c|c|}
\hline
RE-SAV&\multicolumn{2}{c|}{BDF1}&\multicolumn{2}{c|}{BDF2}&\multicolumn{2}{c|}{BDF3}&\multicolumn{2}{c|}{BDF4}\\
\cline{1-9}
$\Delta t$&Error&Rate&Error&Rate&Error&Rate&Error&Rate\\
\cline{1-9}
$\frac18$      &8.2460e-1   &---   &1.9525e-1   &---    &6.7122e-2   &---   &1.5909e-2   &---   \\
$\frac{1}{16}$ &3.5383e-1   &1.2206&4.4761e-2   &2.1250 &6.8158e-3   &3.2998&8.5741e-4   &4.2137\\
$\frac{1}{32}$ &1.5361e-1   &1.2038&1.0606e-2   &2.0774 &7.5492e-4   &3.1745&4.1014e-5   &4.3858\\
$\frac{1}{64}$ &7.0724e-2   &1.1190&2.5847e-3   &2.0368 &8.8972e-5   &3.0849&2.4357e-6   &4.0737\\
$\frac{1}{128}$&3.3726e-2   &1.0683&6.3853e-4   &2.0172 &1.0825e-5   &3.0389&1.5565e-7   &3.9679\\
$\frac{1}{256}$&1.6304e-2   &1.0486&1.5868e-4   &2.0086 &1.3382e-6   &3.0160&1.0484e-8   &3.8920\\
\hline
\end{tabular}
\end{table}

\textbf{Example 2}: In the following, we solve a benchmark problem for the merging of a rectangular array of $9\times9$ circles governed by Cahn-Hilliard equation on $[0,2)^2$ which can also be seen in \cite{huang2020highly}. To give a more efficient simulation, we specify the operators $\mathcal{L}=-\epsilon^2\Delta+\beta I$ and $F(\phi)=\frac{1}{4}(\phi^2-1-\beta)^2$. We take $\epsilon=0.01$, $\beta=2$ and discretize the space by the Fourier spectral method with $256\times256$ modes. The initial condition is chosen as the following
\begin{equation*}
\aligned
\phi_0(x,y,0)=80-\sum\limits_{i=1}^9\sum\limits_{j=1}^9\displaystyle\frac{\tanh(\sqrt{(x-x_i)^2+(y-y_j)^2}-R_0)}{\sqrt{2}\epsilon},
\endaligned
\end{equation*}
where $R_0=0.085$, $x_i=0.2\times i$ and $x_j=0.2\times j$ for $i,j=1,\ldots9$.

Snapshots of the phase variable $\phi$ taken at $t=0$, $0.5$, $1$, $3$, $4.2$, $4.8$, $10$ and $100$ with $\Delta t=0.01$ are shown in Figure \ref{fig:fig2}. The phase separation and coarsening process can be observed very simply which is consistent with the results in \cite{huang2020highly}. In Figure \ref{fig:fig3}, we plot the time evolution of the energy functional with three different time step size of $\Delta t=0.01$, $0.1$ and $1$ by using the first-order RE-SAV scheme. Meanwhile, we plot the energy evolution for the traditional SAV approach and the proposed RE-SAV approach with $\Delta t=0.1$. All energy curves show the monotonic decays for all time steps which confirms that the algorithm is unconditionally energy stable. Furthermore, the RE-SAV method provides more accurate results.
\begin{figure}[htp]
\centering
\subfigure[t=0]{
\includegraphics[width=3.8cm,height=3.8cm]{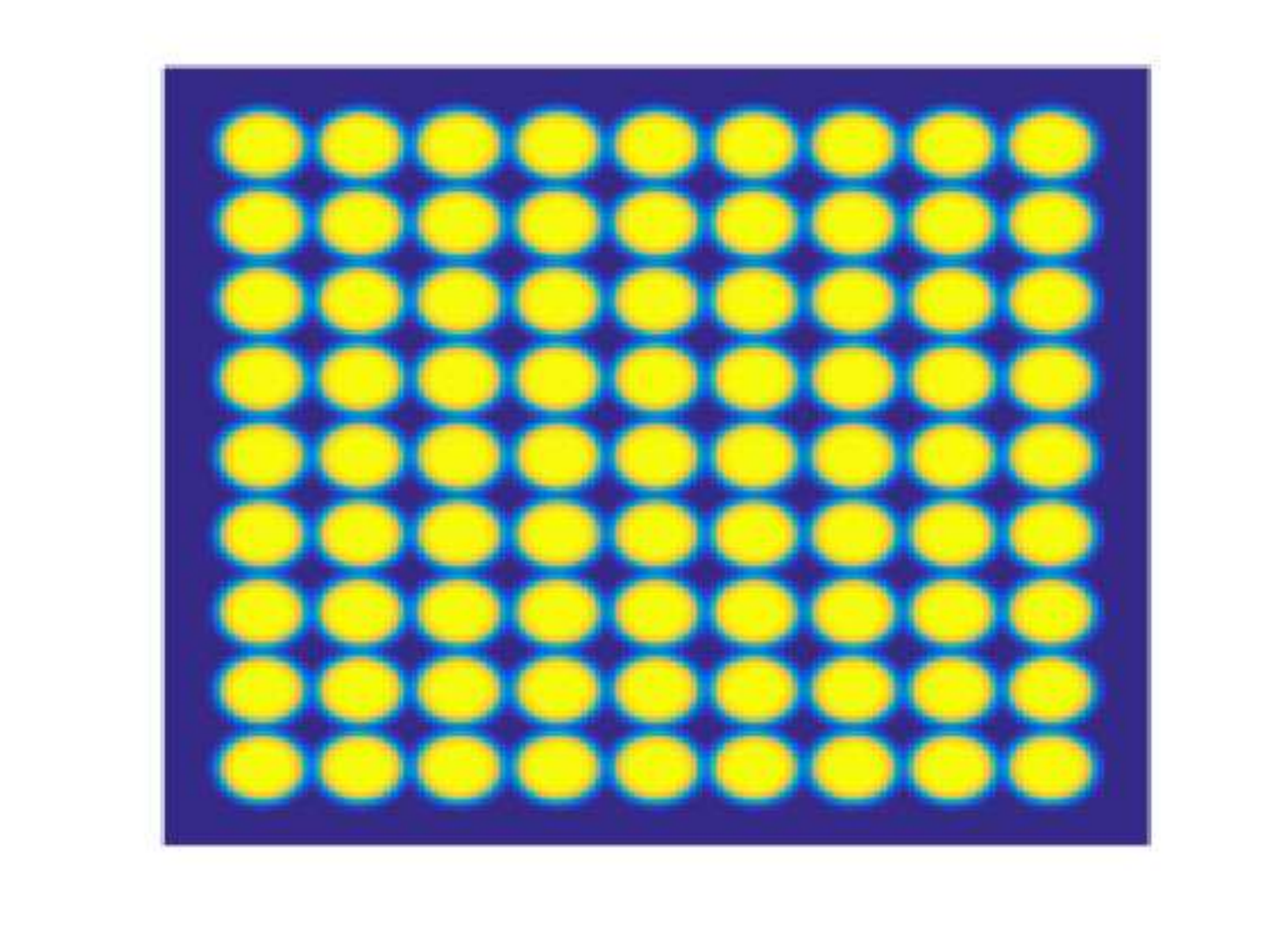}
}
\subfigure[t=0.5]
{
\includegraphics[width=3.8cm,height=3.8cm]{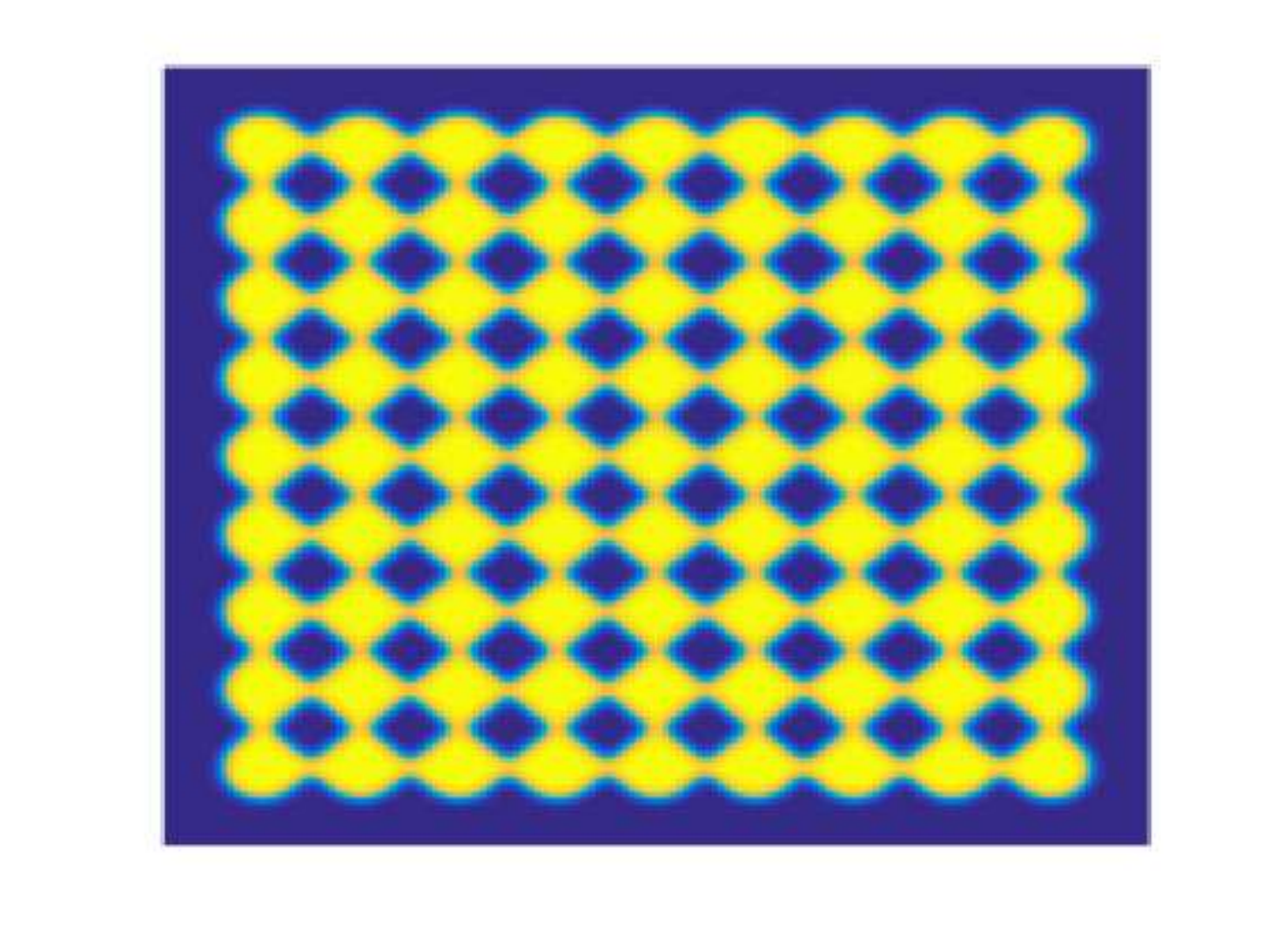}
}
\subfigure[t=1]
{
\includegraphics[width=3.8cm,height=3.8cm]{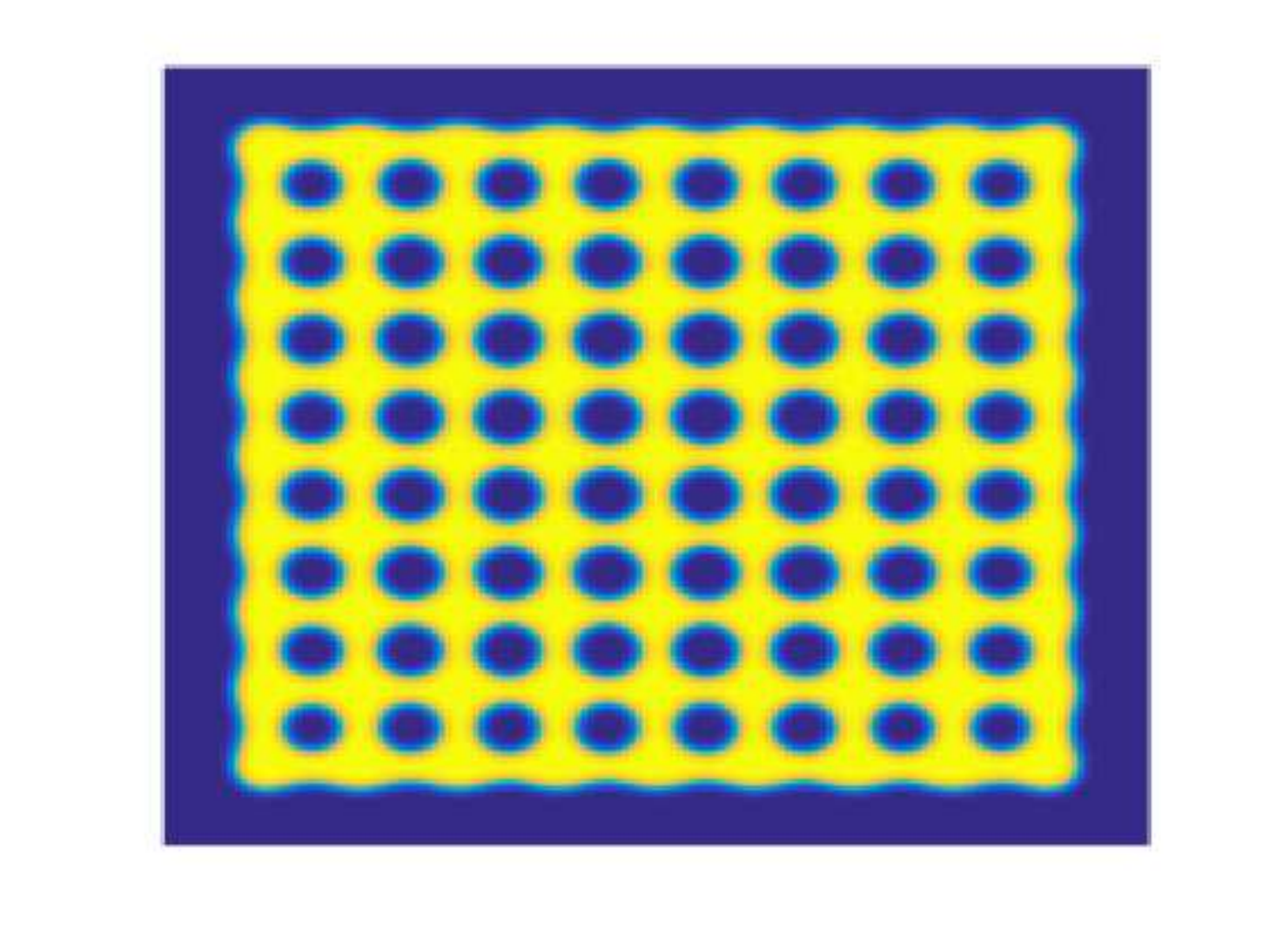}
}
\subfigure[t=3]
{
\includegraphics[width=3.8cm,height=3.8cm]{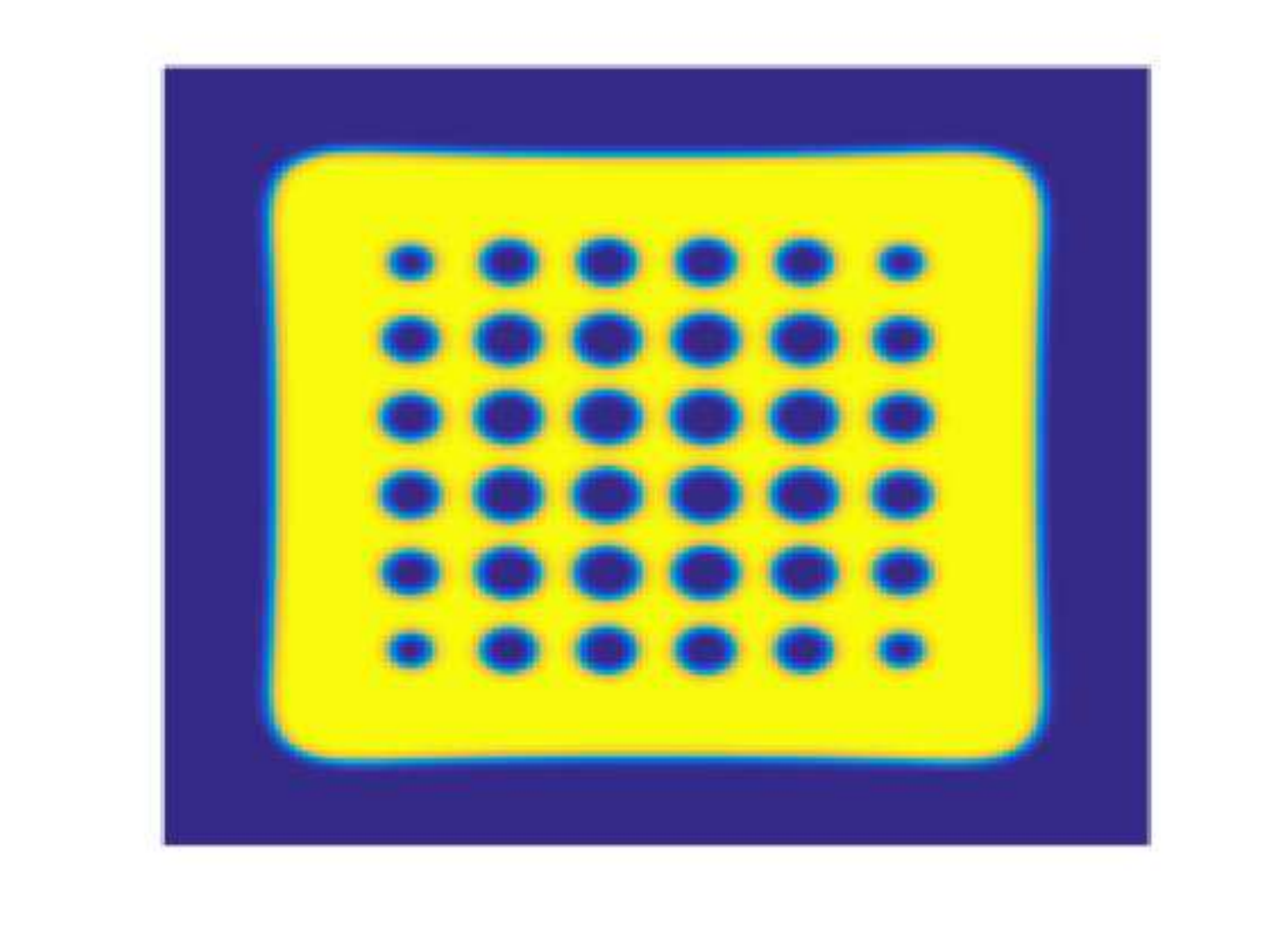}
}
\quad
\subfigure[t=4.2]
{
\includegraphics[width=3.8cm,height=3.8cm]{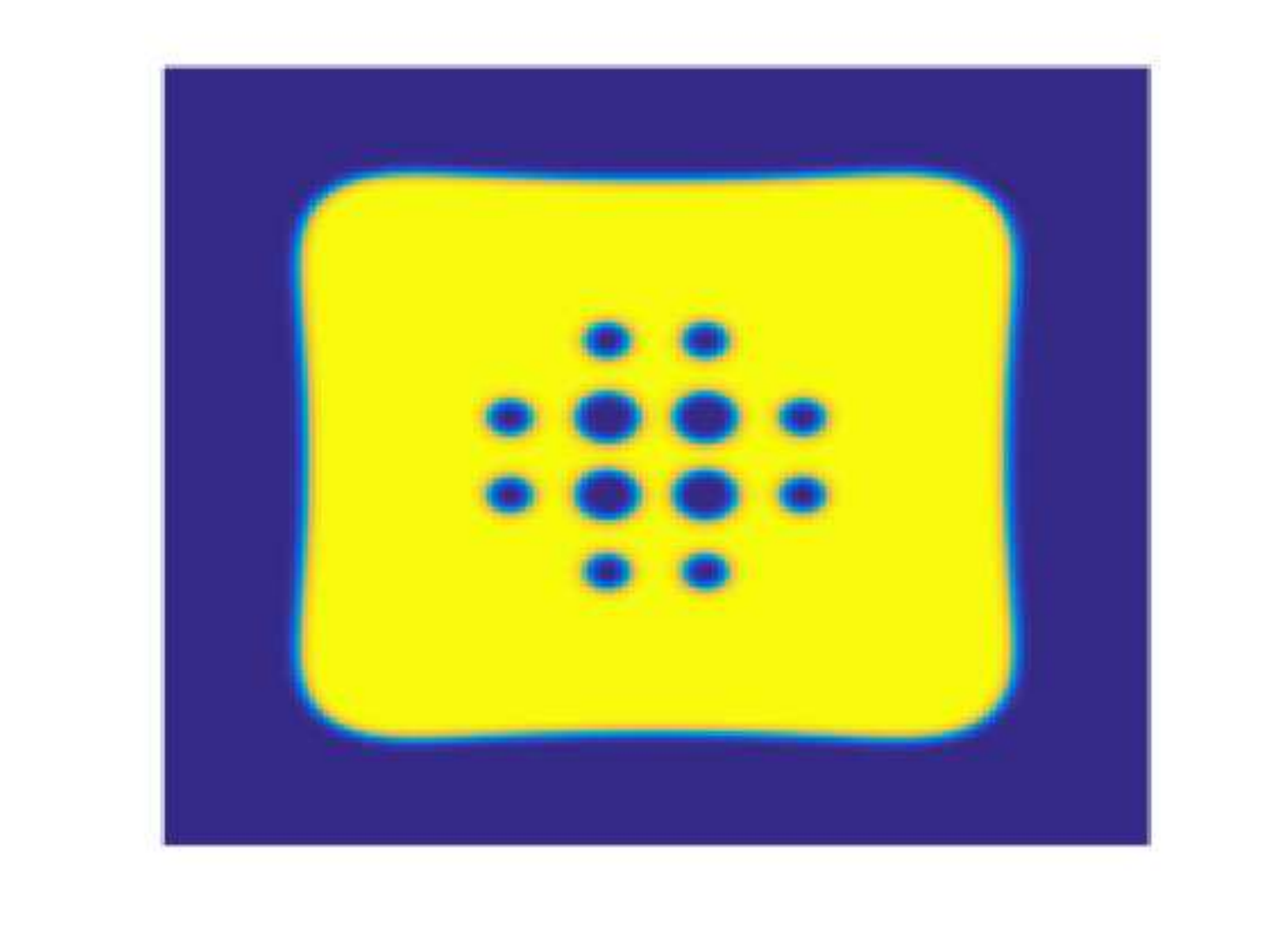}
}
\subfigure[t=4.8]
{
\includegraphics[width=3.8cm,height=3.8cm]{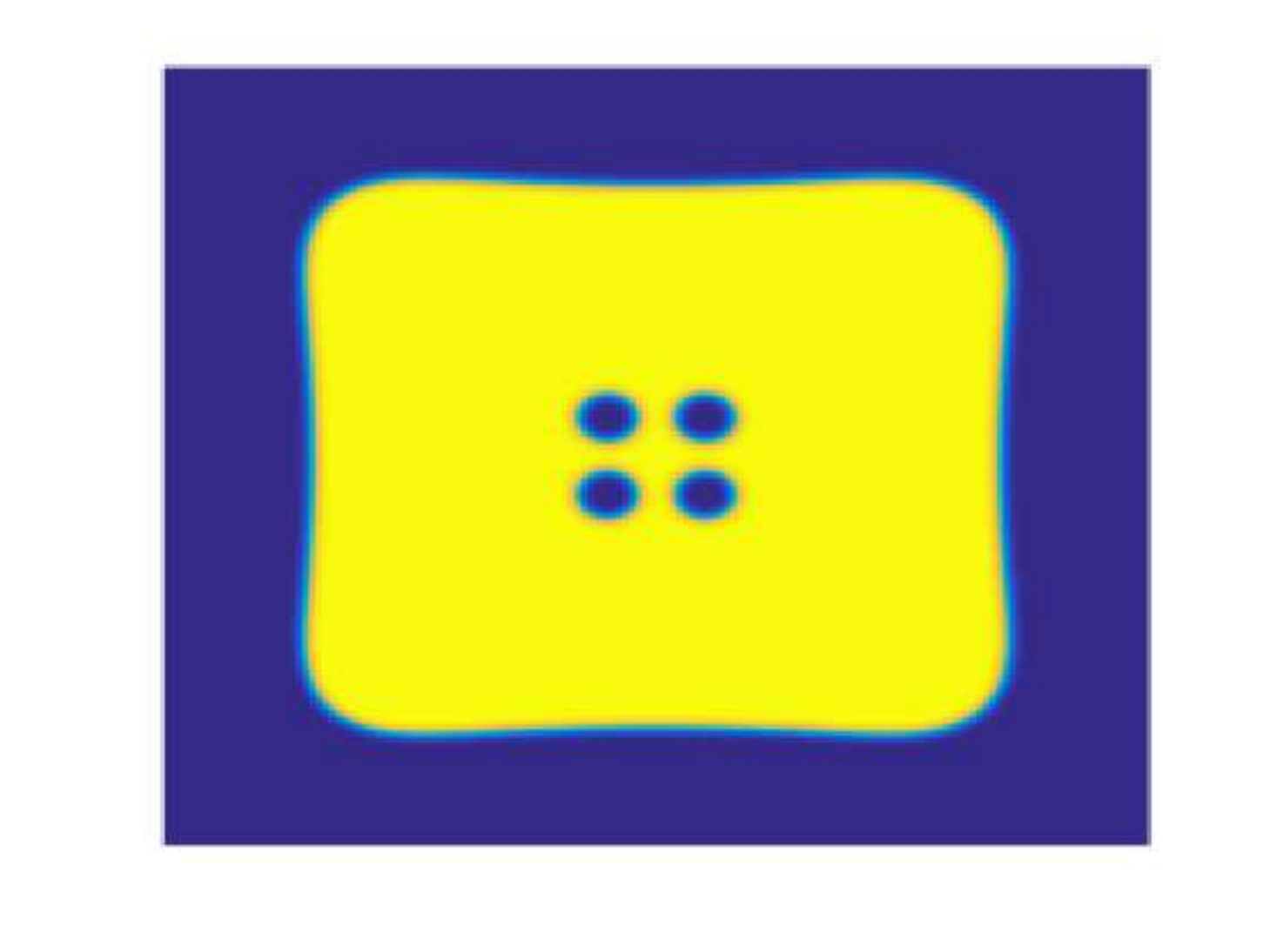}
}
\subfigure[t=10]
{
\includegraphics[width=3.8cm,height=3.8cm]{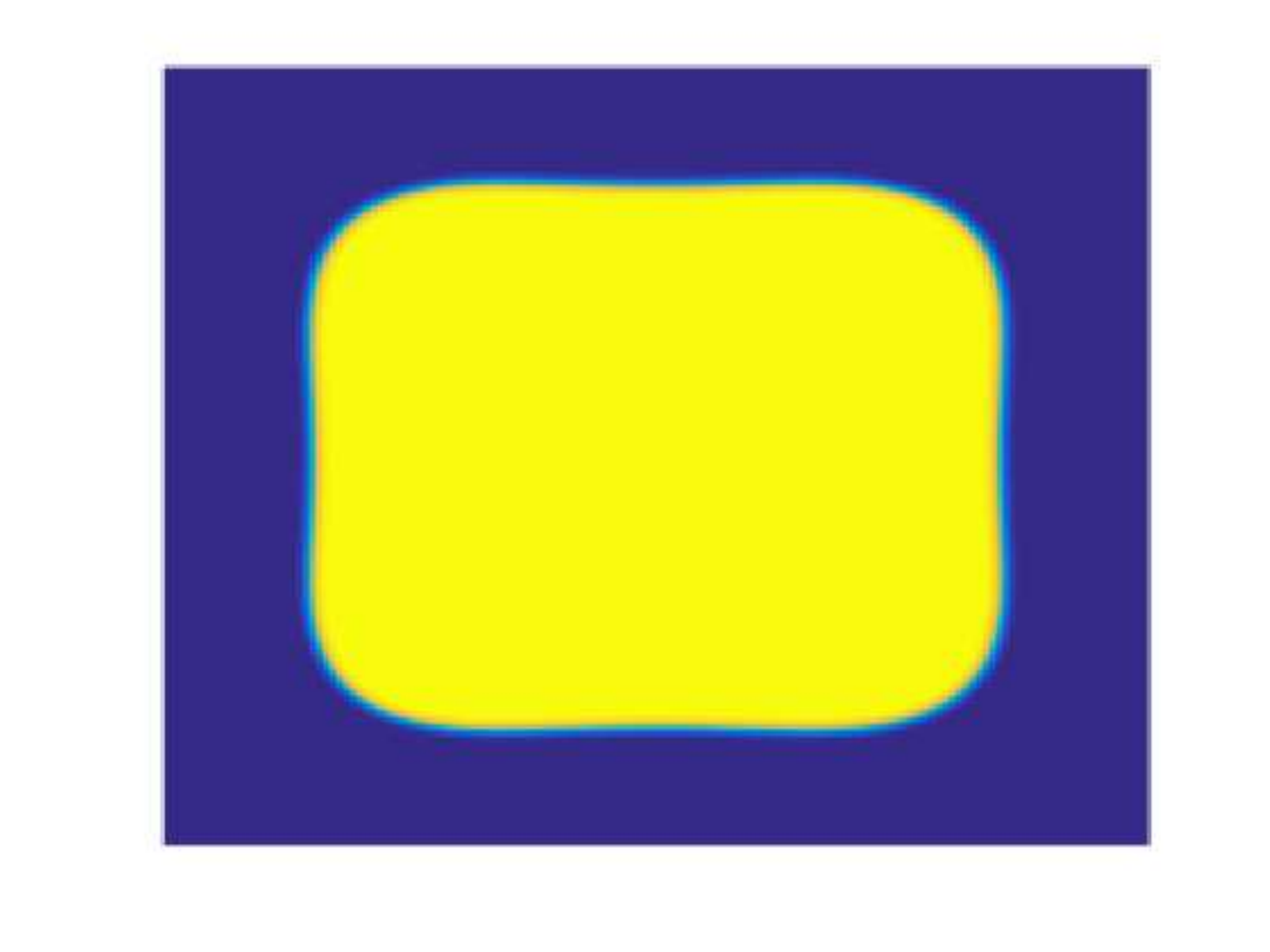}
}
\subfigure[t=100]
{
\includegraphics[width=3.8cm,height=3.8cm]{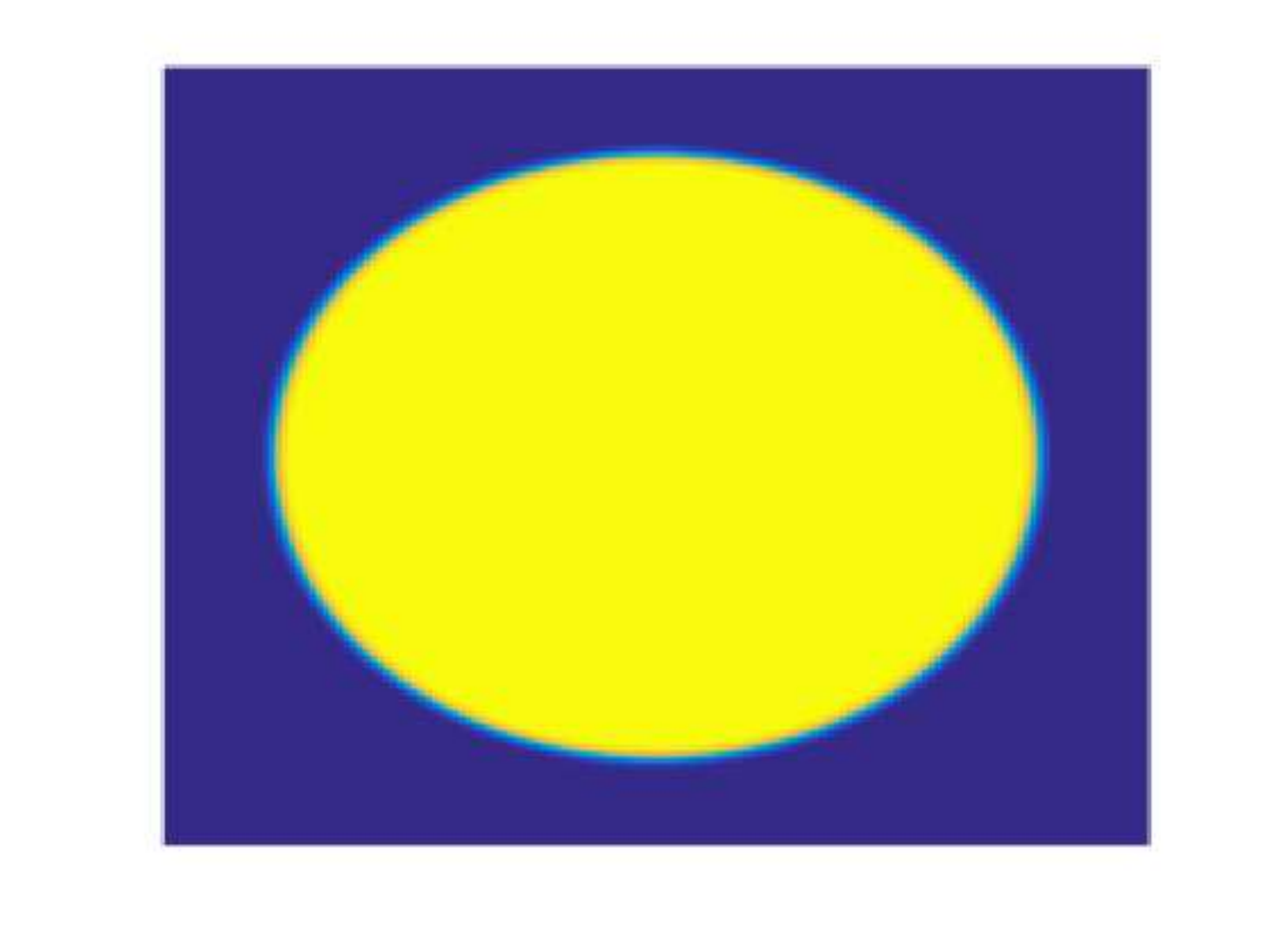}
}
\caption{Snapshots of the phase variable $\phi$ and phase interface are taken at t=0, 0.5, 1, 3, 4.2, 4.8, 10 and 100 with $\Delta t=0.01$ for example 2.}\label{fig:fig2}
\end{figure}
\begin{figure}[htp]
\centering
\includegraphics[width=8cm,height=8cm]{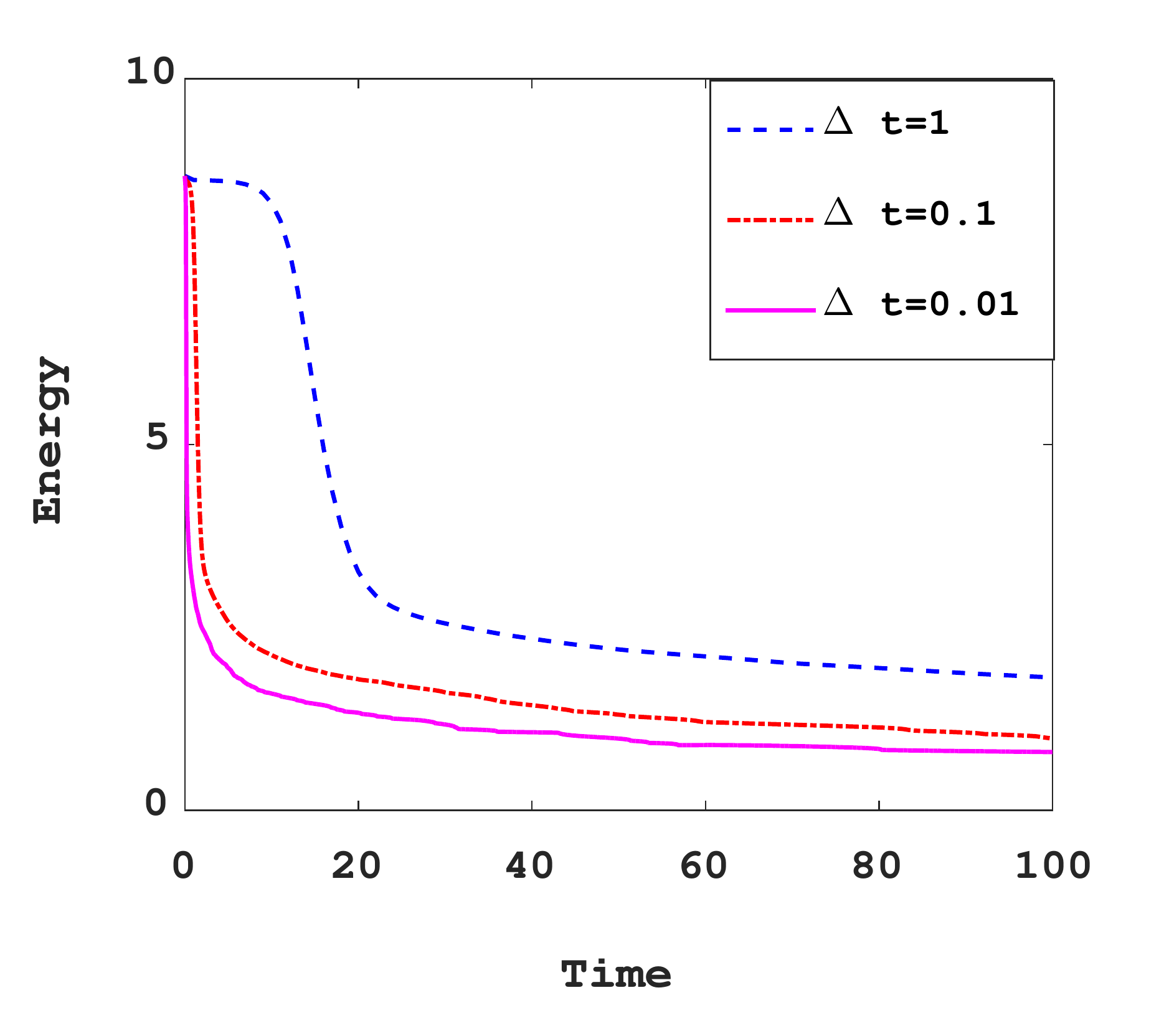}
\includegraphics[width=8cm,height=8cm]{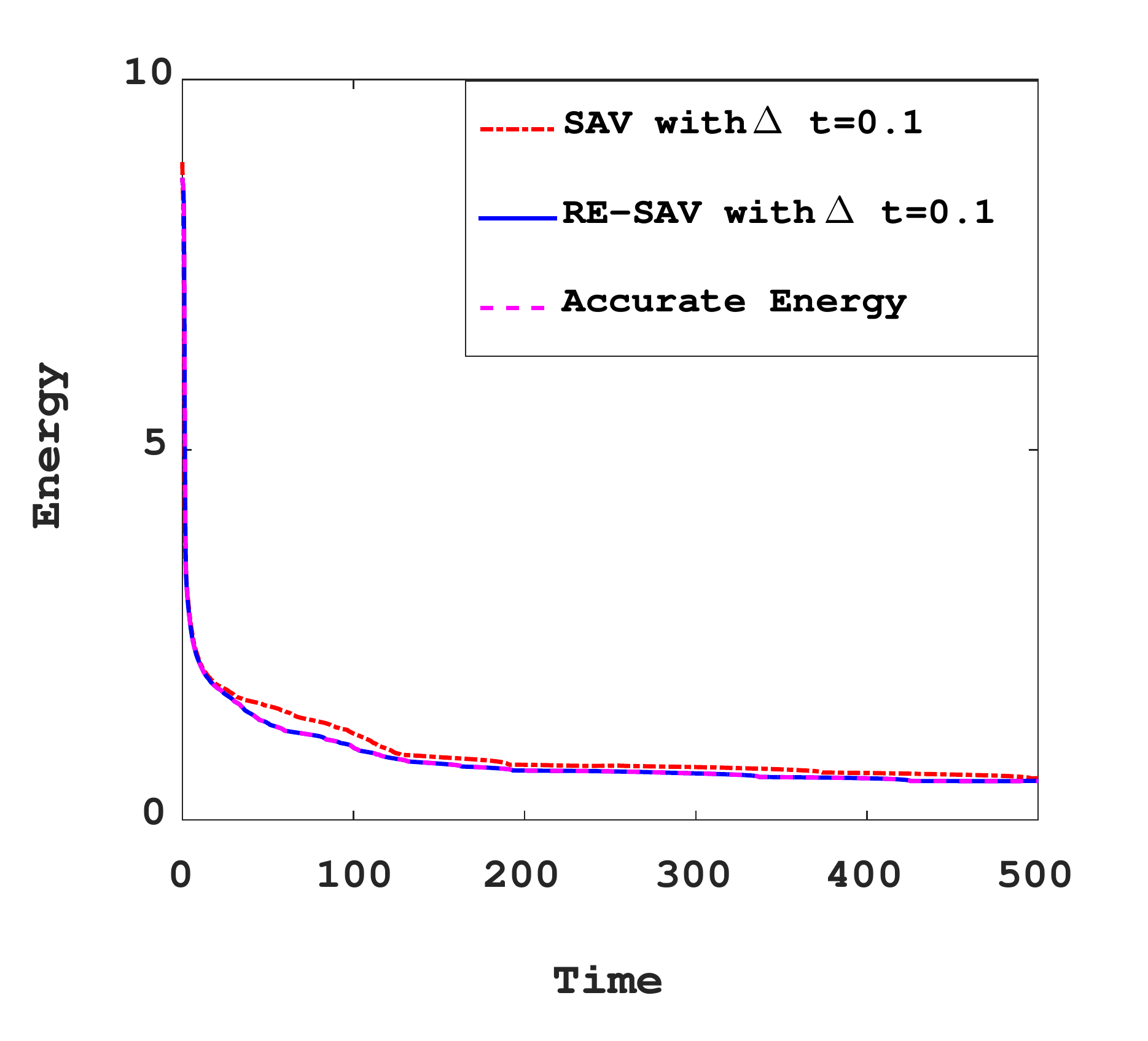}
\caption{Left: time evolution of the energy functional for three different time steps of $\Delta t=0.01$, $0.1$ and $1$. Right: energy evolution for the traditional SAV approach and the proposed RE-SAV approach with $\Delta t=0.1$.}\label{fig:fig3}
\end{figure}
\subsection{Swift-Hohenberg equations}
In this subsection, we will simulate the phase transition behavior of the Swift-Hohenberg equation with quadratic-cubic nonlinearity for the proposed RE-SAV approach. The similar numerical example can be found in many articles such as \cite{li2017efficient,yang2017linearly}. The Swift-Hohenberg model is a very important phase field crystal model which can be described many crystal phenomena such as edge dislocations \cite{BerryDiffusive}, deformation and plasticity in nanocrystalline materials \cite{StefanovicPhase}, fcc ordering \cite{WuPhase}, epitaxial growth and zone refinement \cite{ElderModeling}. Elder \cite{elder2002modeling} firstly proposed the phase field crystal (PFC) model based on density functional theory in 2002. This model can simulate the evolution of crystalline microstructure on atomistic length and diffusive time scales. It naturally incorporates elastic and plastic deformations and multiple crystal orientations, and can be applied to many different physical phenomena.

In particular, consider the following Swift-Hohenberg free energy:
\begin{equation*}
E(\phi)=\int_{\Omega}\left(\frac{1}{4}\phi^4-\frac{g}{3}\phi^3+\frac{1}{2}\phi\left(-\epsilon+(1+\Delta)^2\right)\phi\right)d\textbf{x},
\end{equation*}
where $\textbf{x} \in \Omega \subseteq \mathbb{R}^d$, $\phi$ is the density field, $g\geq0$ and $\epsilon>0$ are constants with physical significance, $\Delta$ is the Laplacian operator.

Considering a gradient flow in $H^{-1}$, one can obtain the Swift-Hohenberg equation under the constraint of mass conservation as follows:
\begin{equation*}
\frac{\partial \phi}{\partial t}=\Delta\mu=\Delta\left(\phi^3-g\phi^2-\epsilon\phi+(1+\Delta)^2\phi\right), \quad(\textbf{x},t)\in\Omega\times Q,
\end{equation*}
which is a sixth-order nonlinear parabolic equation and can be applied to simulate various phenomena such as crystal growth, material hardness and phase transition. Here $Q=(0,T]$, $\mu=\frac{\delta E}{\delta \phi}$ is called the chemical potential. In particular, the above Swift-Hohenberg will be the classical phase field crystal model when $g=0$.

In the following example, we simulate the benchmark simulation for the SH model.

\textbf{Example 3}: The initial condition is
\begin{equation}\label{section5_e2}
\aligned
&\phi_0(x,y)=0.07+0.07\times rand(x,y),
\endaligned
\end{equation}
where the $rand(x,y)$ is the random number in $[-1,1]$ with zero mean.  In this test, the domain is $\Omega=[0,100]^2$ with mesh size $256\times256$. The order parameter is $\epsilon=0.025$ and we consider two different $g=0$ and $g=1$ to test the effect on the crystallization.

We show the phase transition behavior of the density field for different values at various times in Figures \ref{fig:fig4-1} with $g=0$ and \ref{fig:fig4-2} with $g=1$. We observe that for different $g$, the shape and rate of crystallization of crystals are different. In all cases, the process of the phase transition is qualitative agreement of the density fields. Similar computation results for phase field crystal model can be found in many articles such as in \cite{yang2017linearly}.

\begin{figure}[htp]
\centering
\subfigure[t=10]{
\includegraphics[width=5cm,height=5cm]{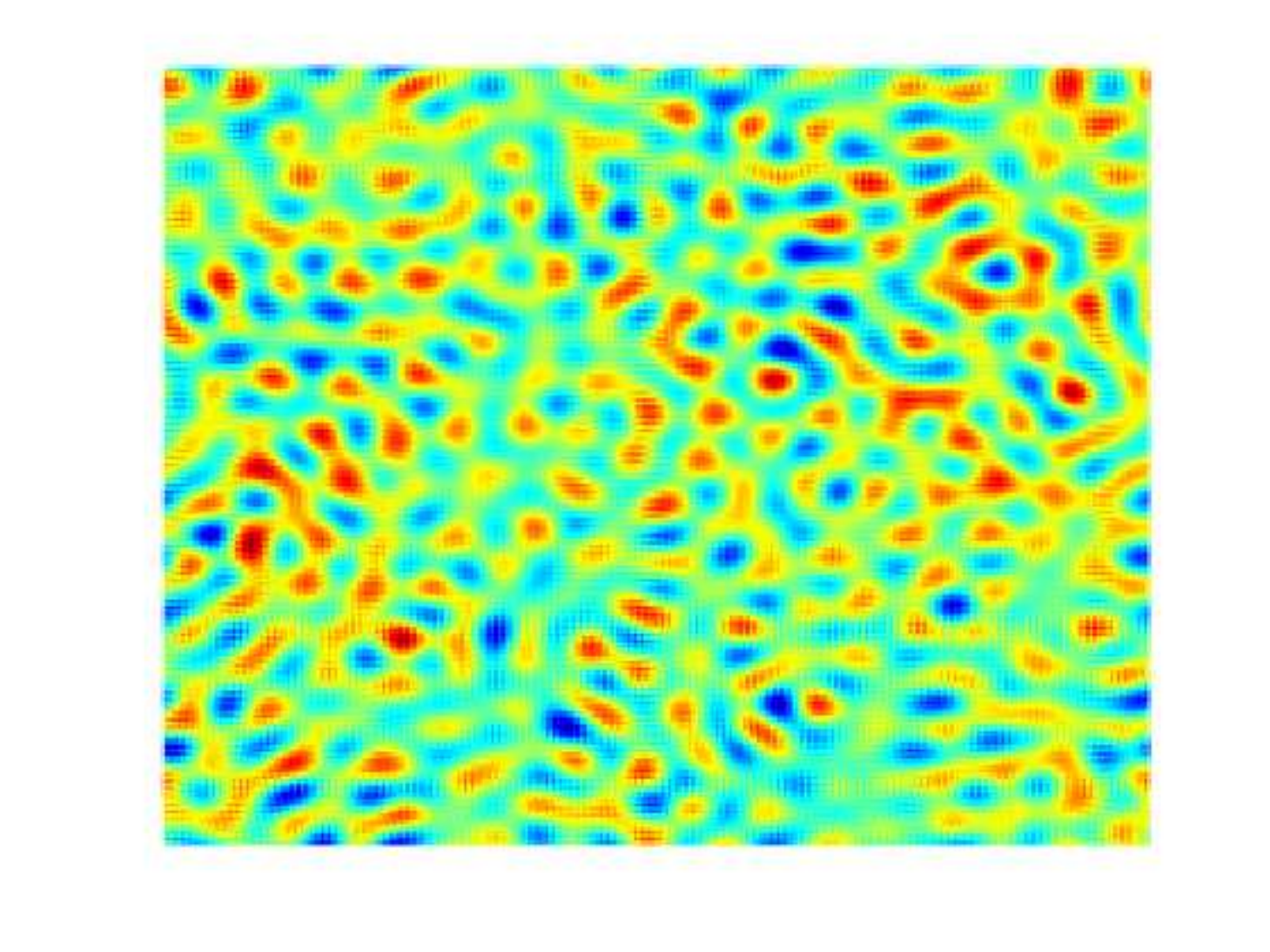}
}
\subfigure[t=100]
{
\includegraphics[width=5cm,height=5cm]{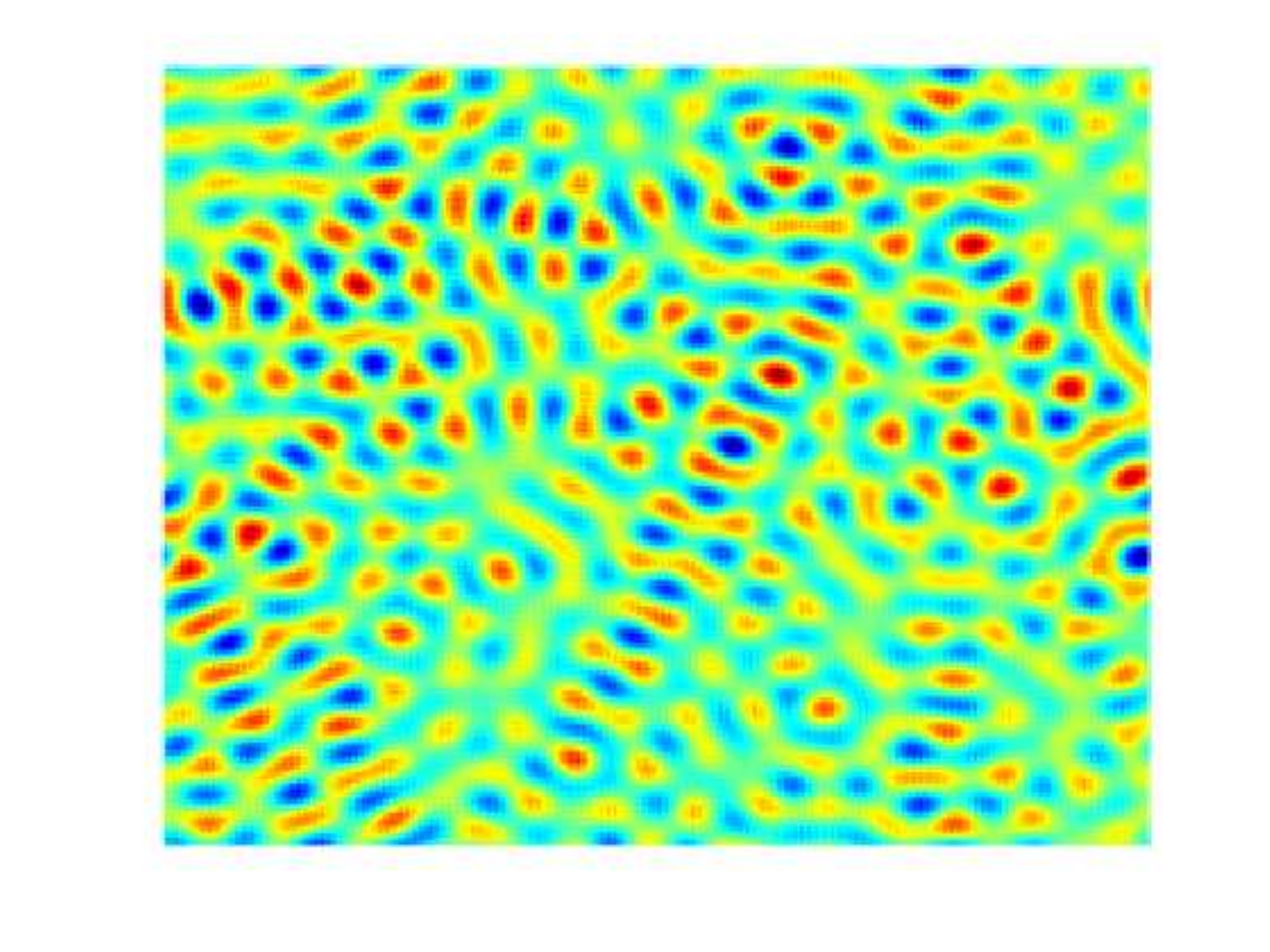}
}
\subfigure[t=300]
{
\includegraphics[width=5cm,height=5cm]{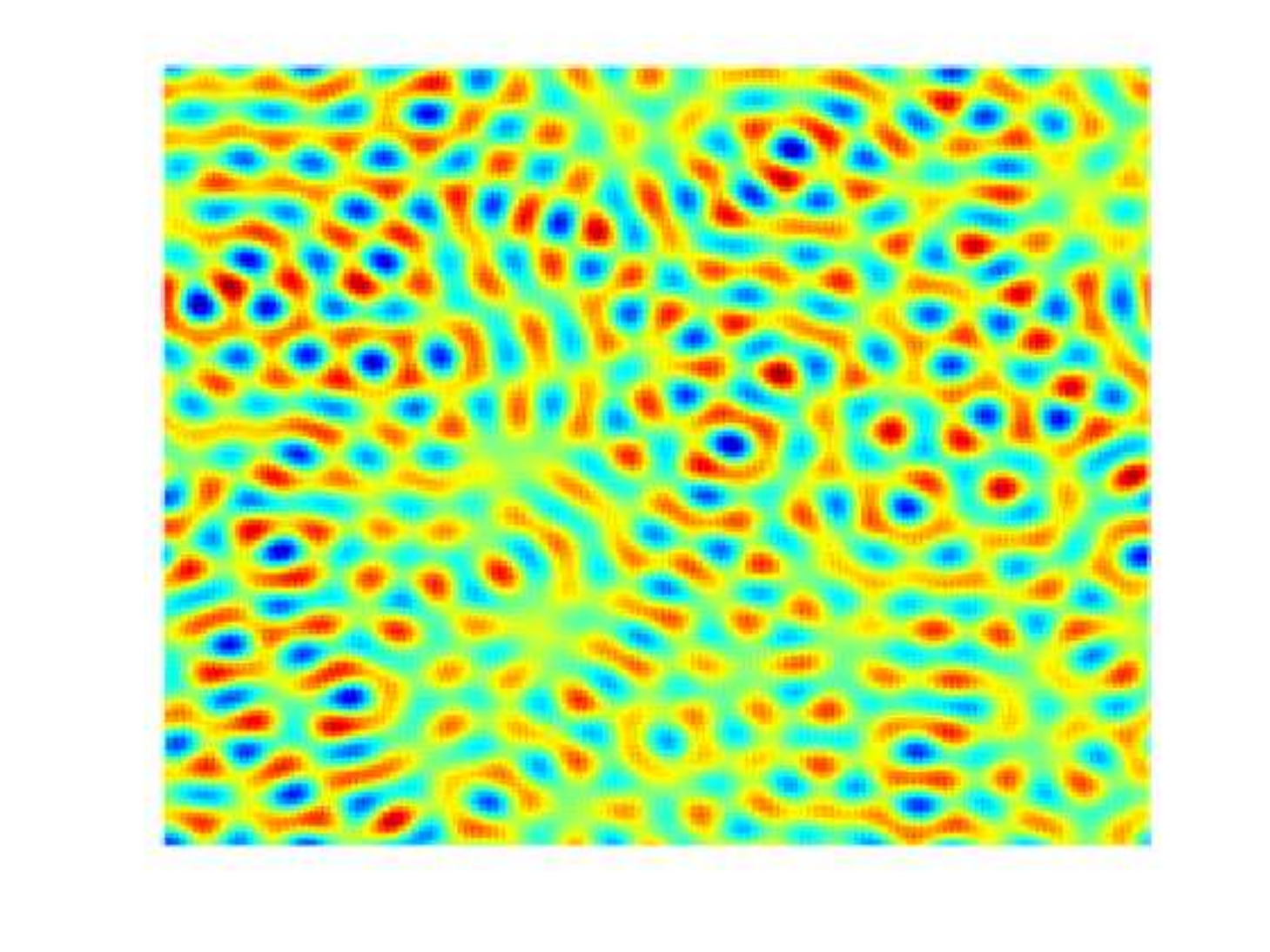}
}
\quad
\subfigure[t=500]{
\includegraphics[width=5cm,height=5cm]{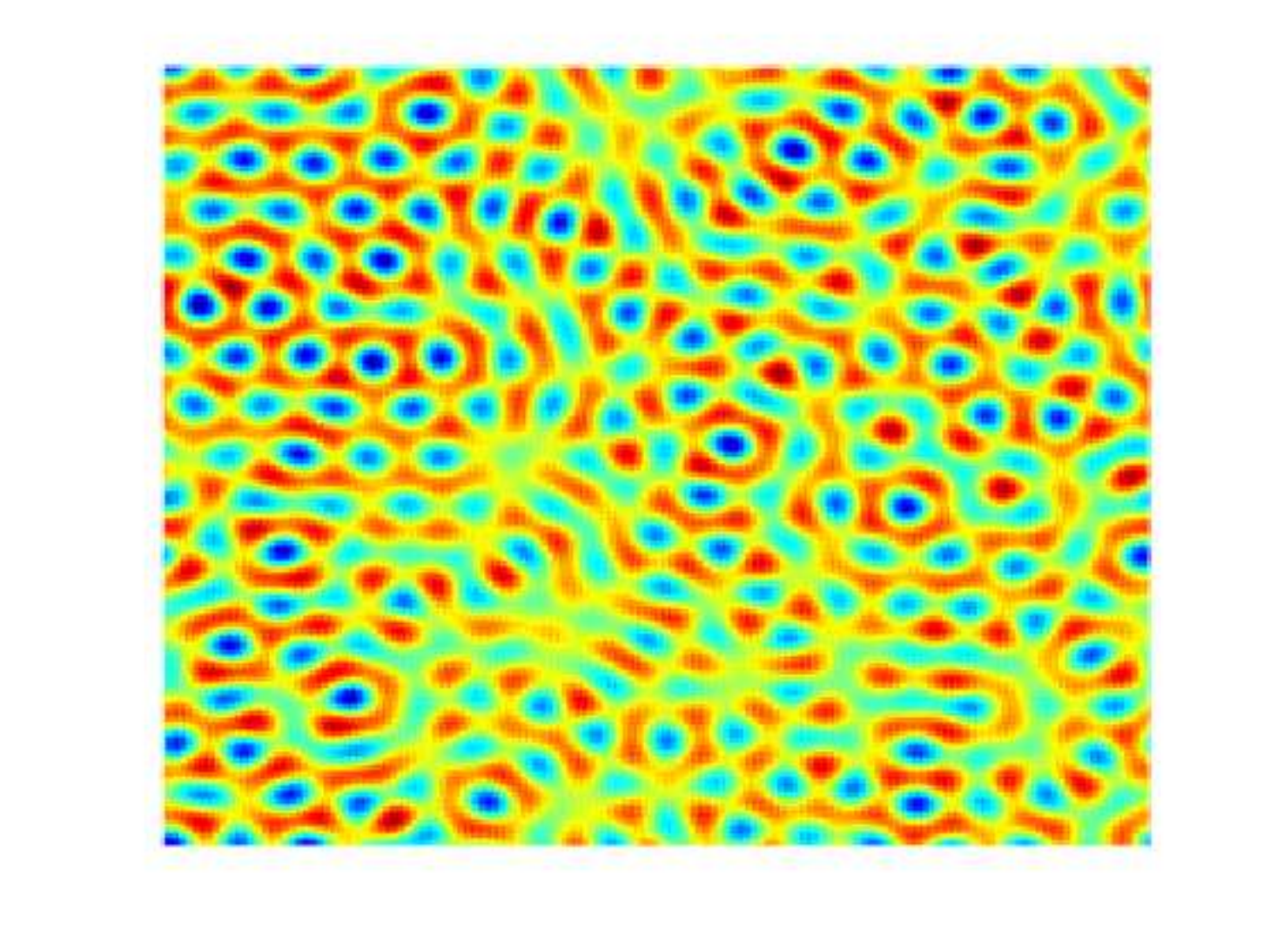}
}
\subfigure[t=800]
{
\includegraphics[width=5cm,height=5cm]{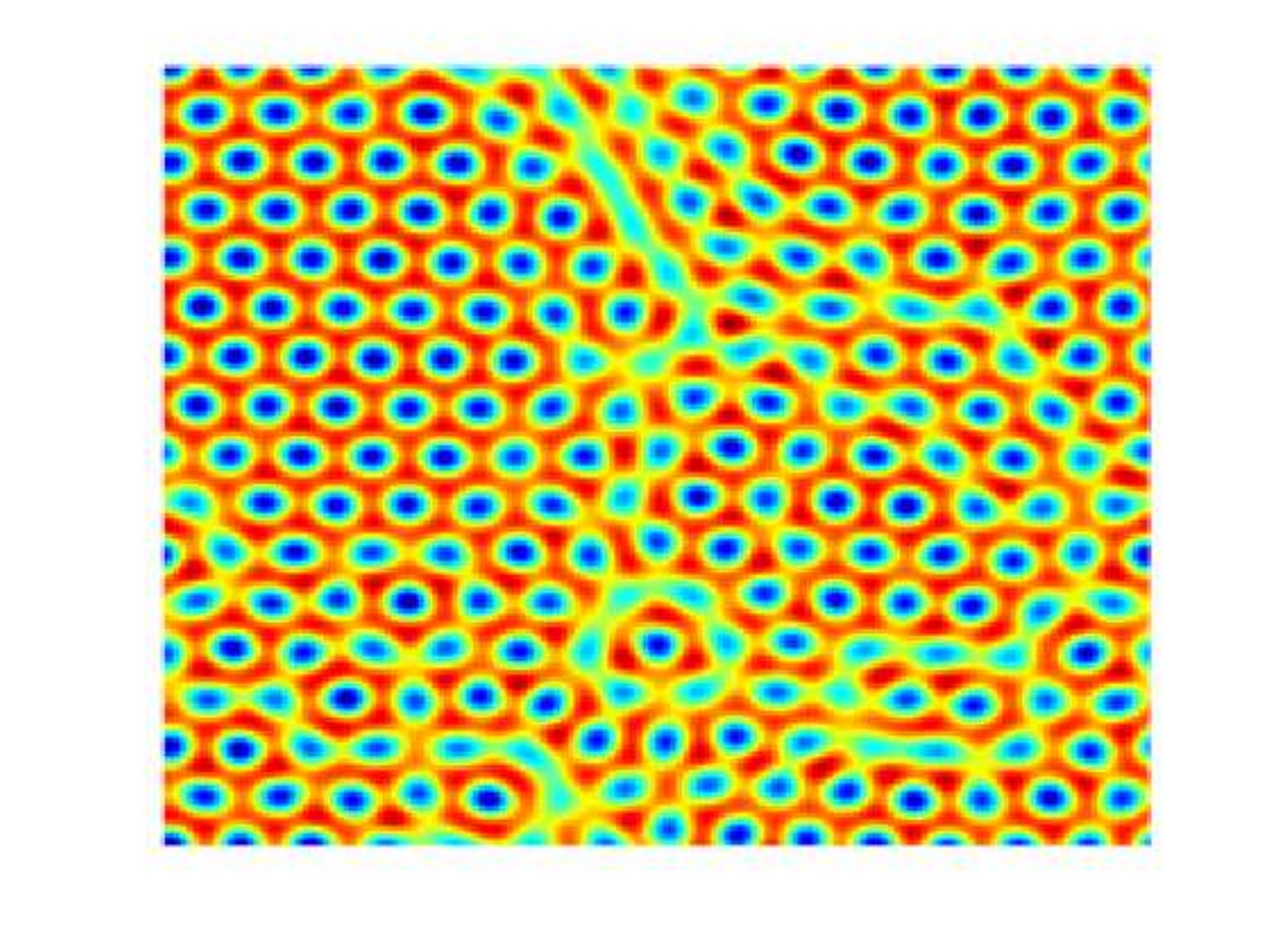}
}
\subfigure[t=1000]
{
\includegraphics[width=5cm,height=5cm]{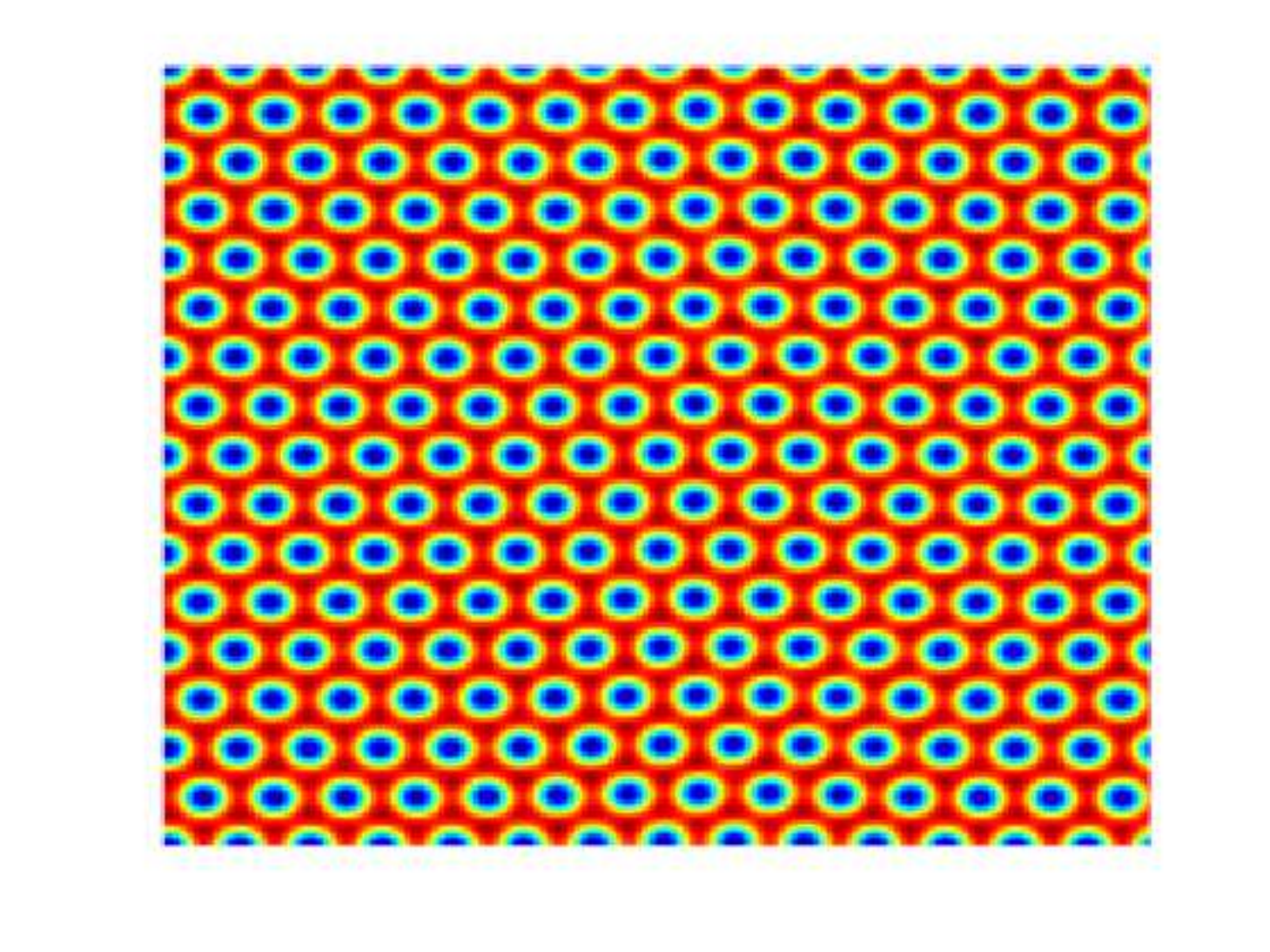}
}
\caption{Snapshots of the phase variable $\phi$ are taken at t=10, 100, 300, 500, 800, 1000 for example 3 with $g=0$.}\label{fig:fig4-1}
\end{figure}
\begin{figure}[htp]
\centering
\subfigure[t=1]{
\includegraphics[width=5cm,height=5cm]{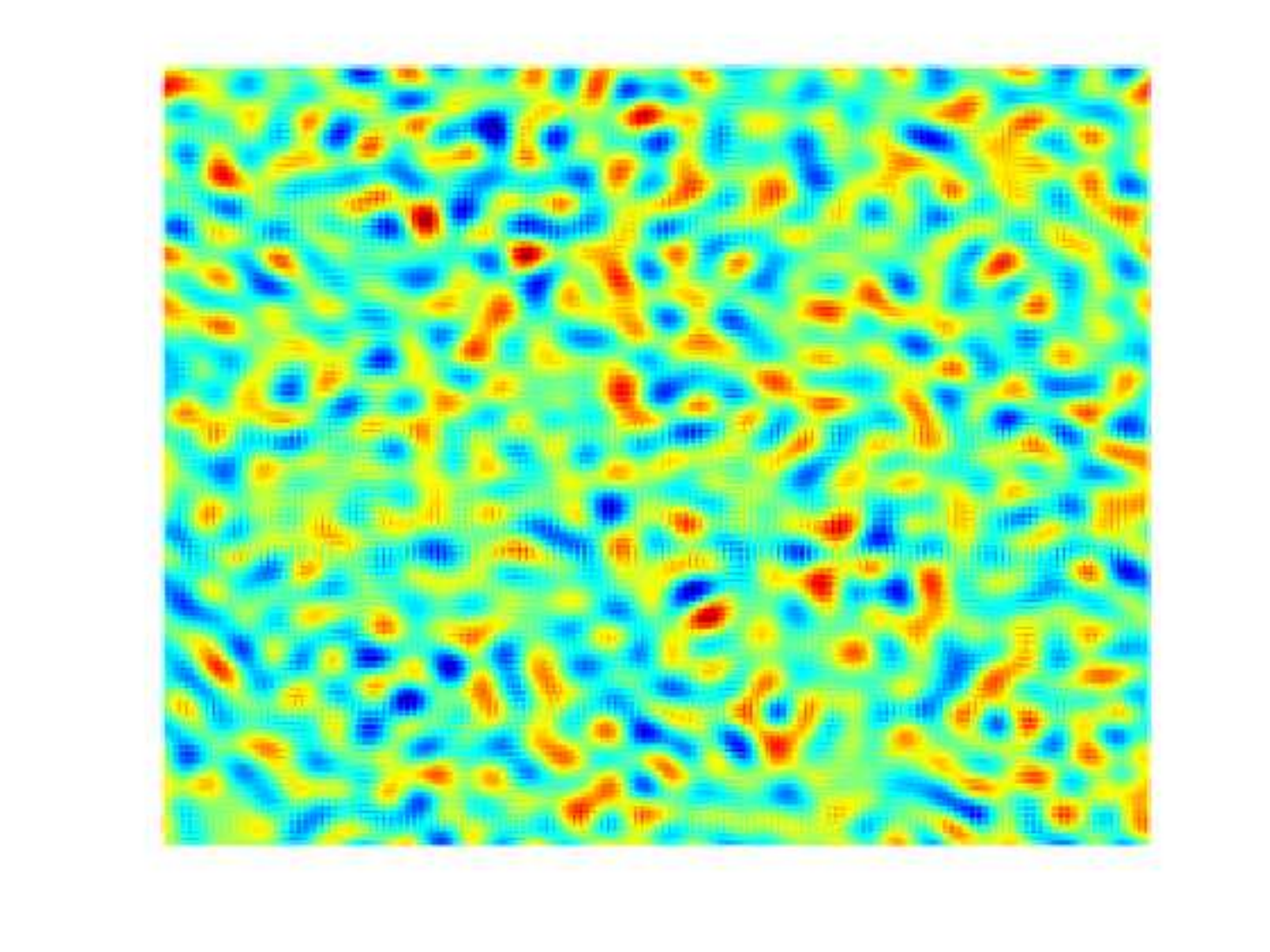}
}
\subfigure[t=10]
{
\includegraphics[width=5cm,height=5cm]{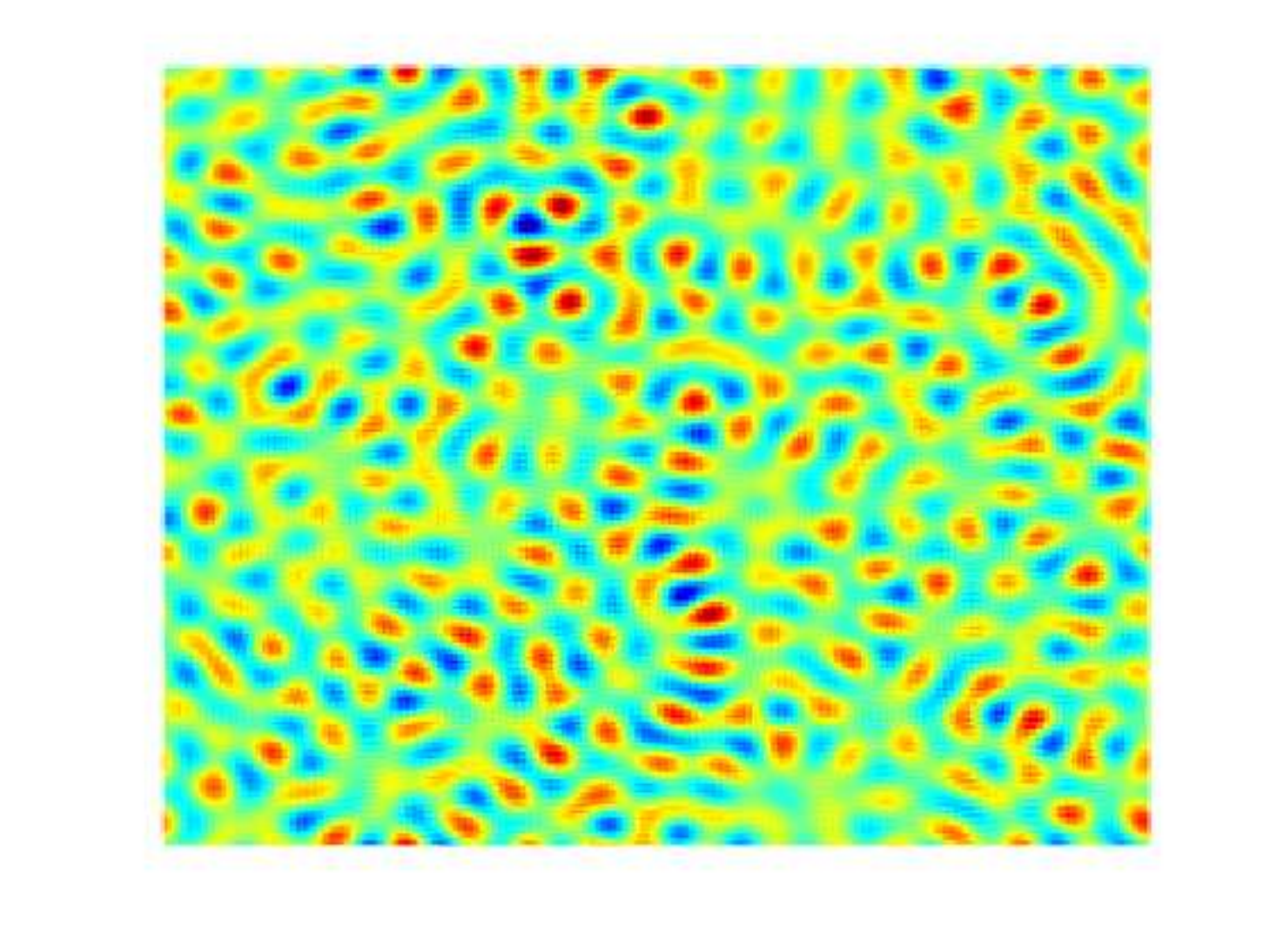}
}
\subfigure[t=20]
{
\includegraphics[width=5cm,height=5cm]{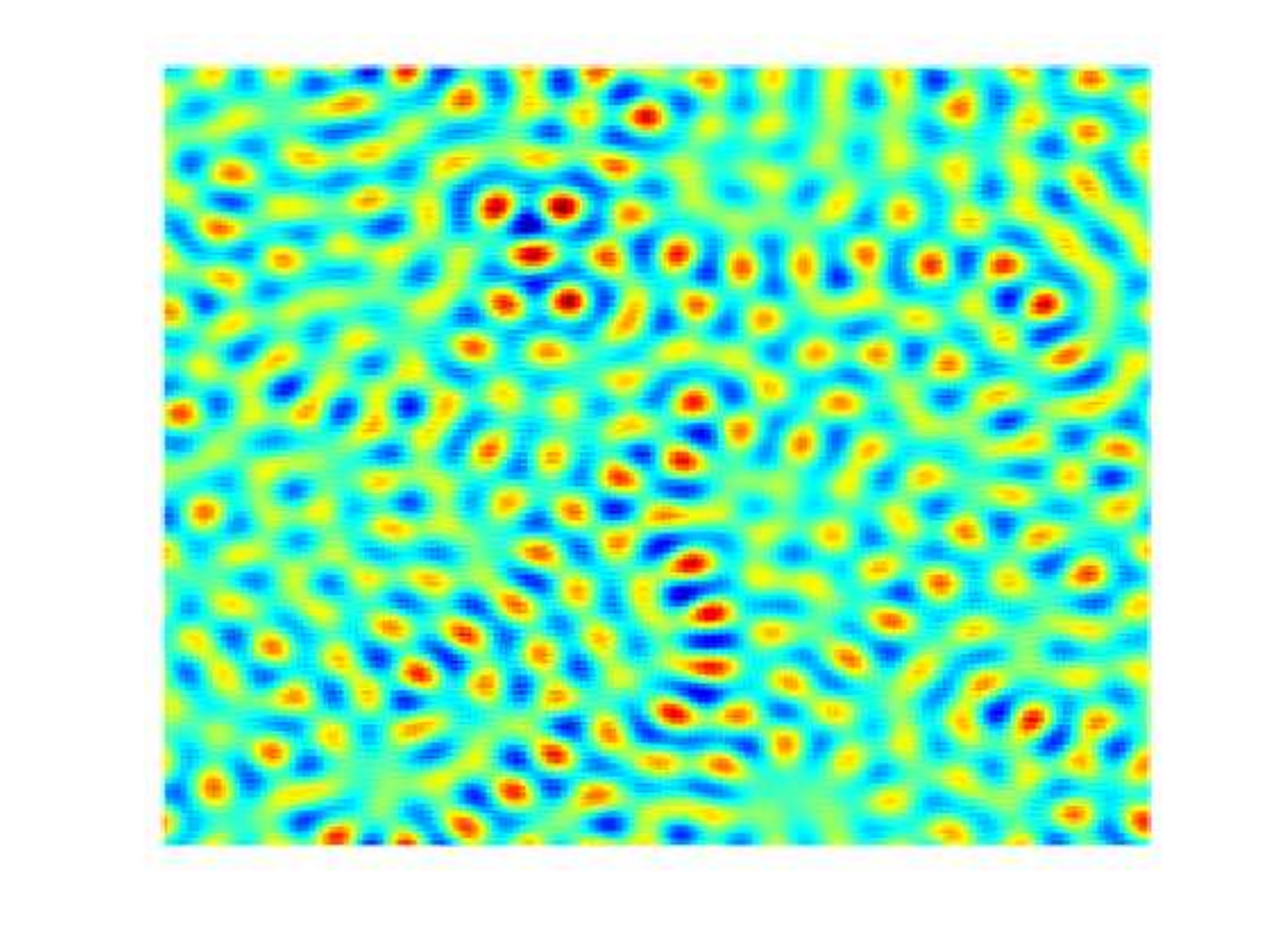}
}
\quad
\subfigure[t=30]{
\includegraphics[width=5cm,height=5cm]{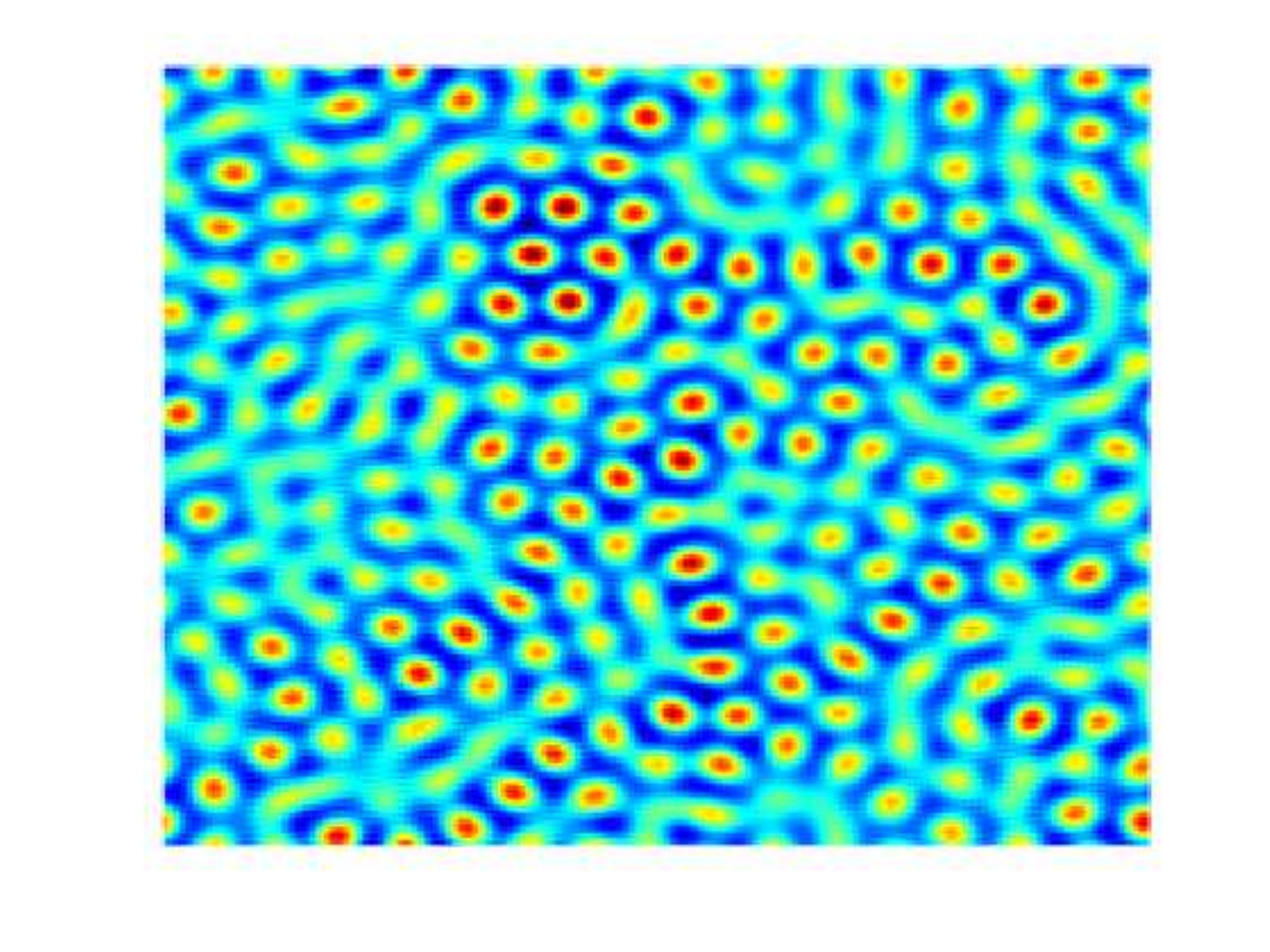}
}
\subfigure[t=40]
{
\includegraphics[width=5cm,height=5cm]{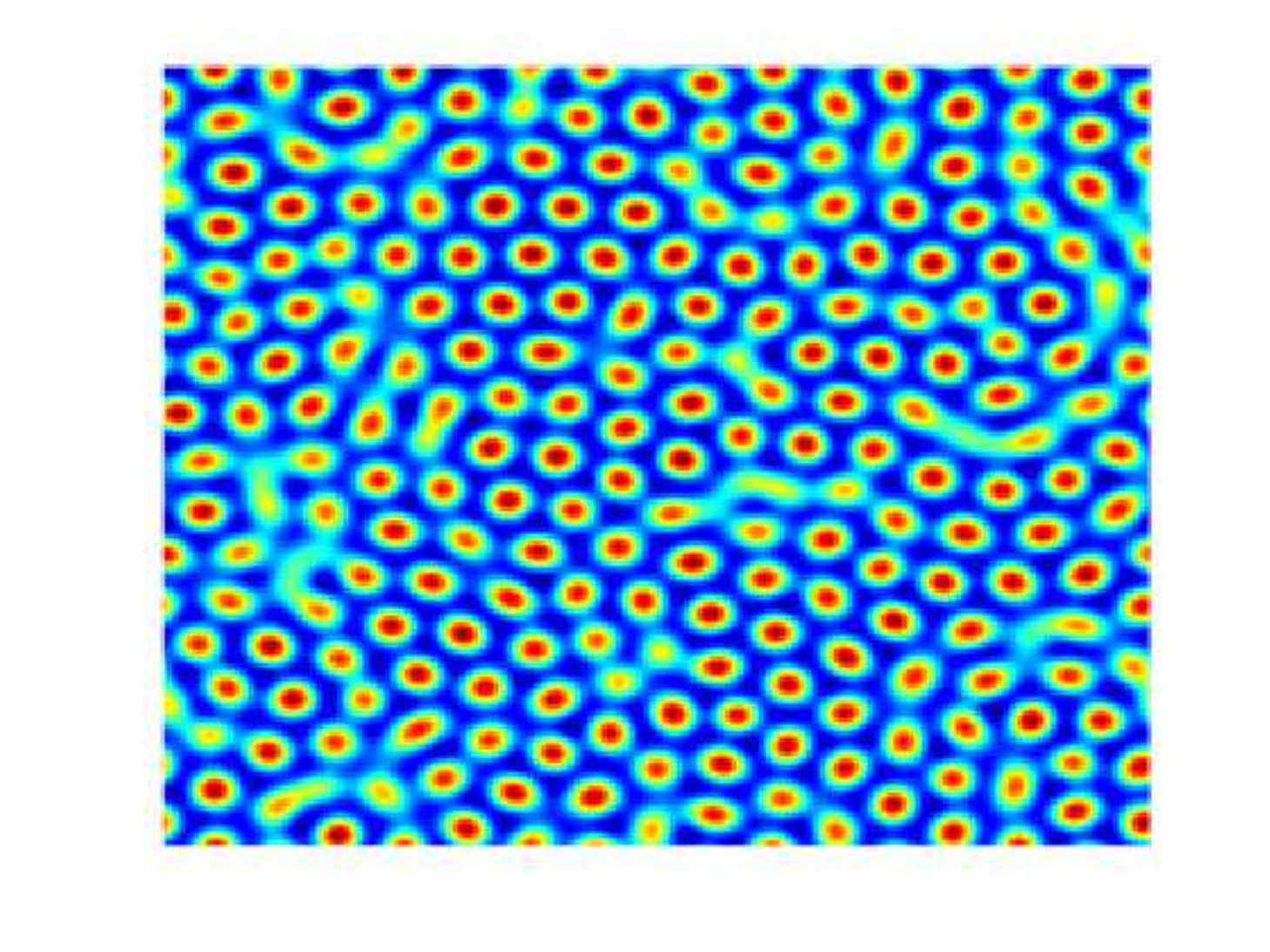}
}
\subfigure[t=100]
{
\includegraphics[width=5cm,height=5cm]{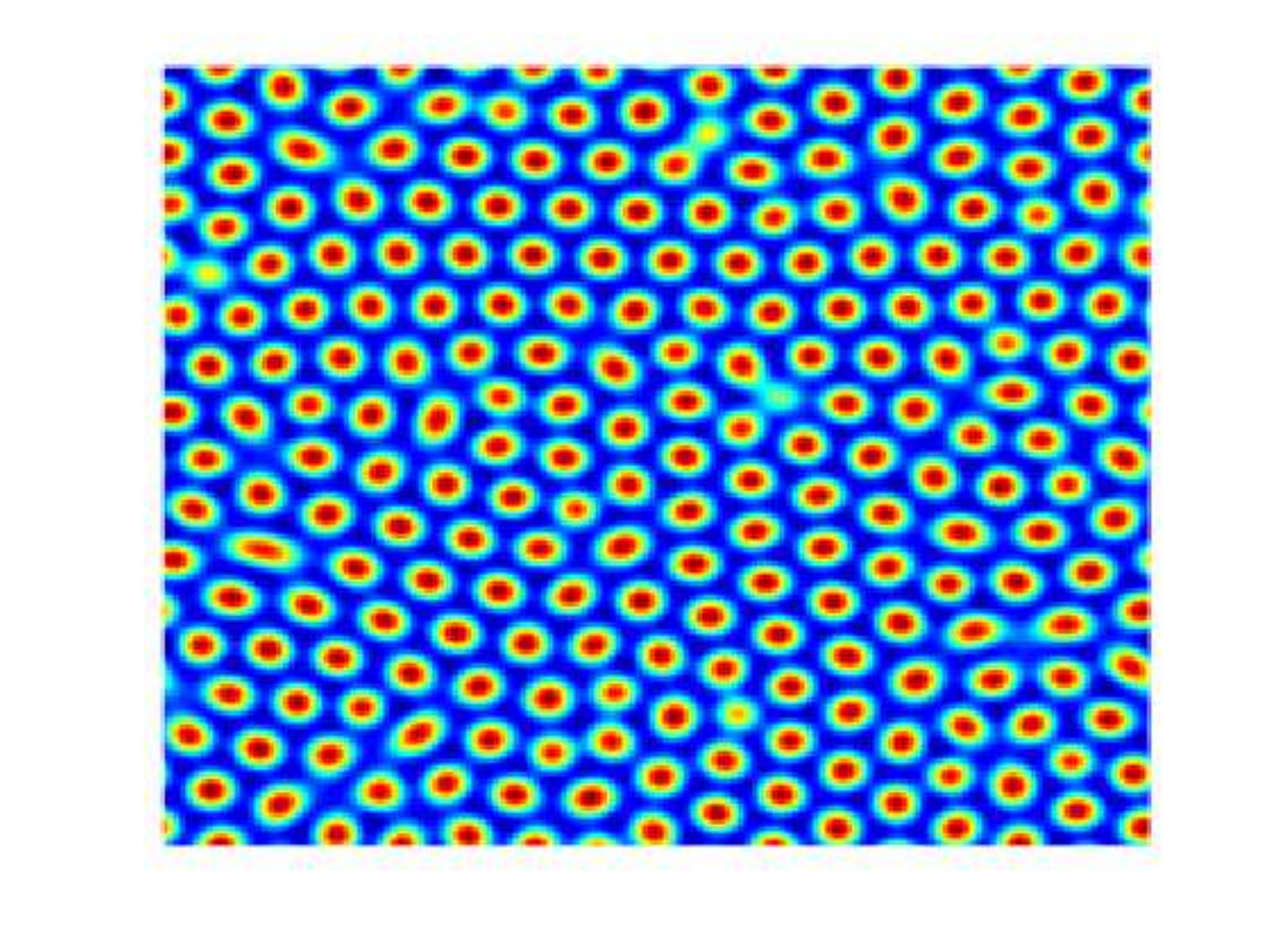}
}
\caption{Snapshots of the phase variable $\phi$ are taken at t=1, 10, 20, 30, 40, 100 for example 3 with $g=1$.}\label{fig:fig4-2}
\end{figure}

\textbf{Example 4}: The process of crystallization in a supercool liquid is very classical example. So in the following, we take $g=0$ and $\epsilon=0.25$ to start our simulation on a domain $[-200,200]\times[-200,200]$. We generated the three crystallites using random perturbations on three small square pathes. The following expression will be used to define the crystallites such as in \cite{yang2017linearly}:
\begin{equation*}
\phi(x_l,y_l)=\overline{\phi}+C\left(\cos(\frac{q}{\sqrt{3}}y_l)\cos(qx_l)-\frac12\cos(\frac{2q}{\sqrt{3}}y_l)\right),
\end{equation*}
where $x_l$, $y_l$ define a local system of cartesian coordinates that is oriented with the
crystallite lattice. The parameters $\overline{\phi}=0.285$, $C=0.446$ and $q=0.66$. The local cartesian system is defined as
\begin{equation*}
\aligned
&x_l(x,y)=xsin\theta+ycos\theta,\\
&y_l(x,y)=-xcos\theta+ysin\theta.
\endaligned
\end{equation*}
we set $512^2$ Fourier modes to discretize the two dimensional space. The centers of three pathes are located at $(150,150)$, $(250,300)$ and $(300,200)$ with $\theta=\pi/4$, $0$ and $-\pi/4$. The length of each square is 40. Figure \ref{fig:fig5} shows the snapshots of the density field $\phi$ at different times. We observe the growth of the crystalline phase. Three different crystal grains grow and become large enough to form grain boundaries finally. We plot the energy dissipative curve in Figure \ref{fig:fig6} using three time steps of $\Delta t=0.1$ and $1$. One can observe that the original energies decrease at all time steps.
\begin{figure}[htp]
\centering
\subfigure[t=0]{
\includegraphics[width=5cm,height=5cm]{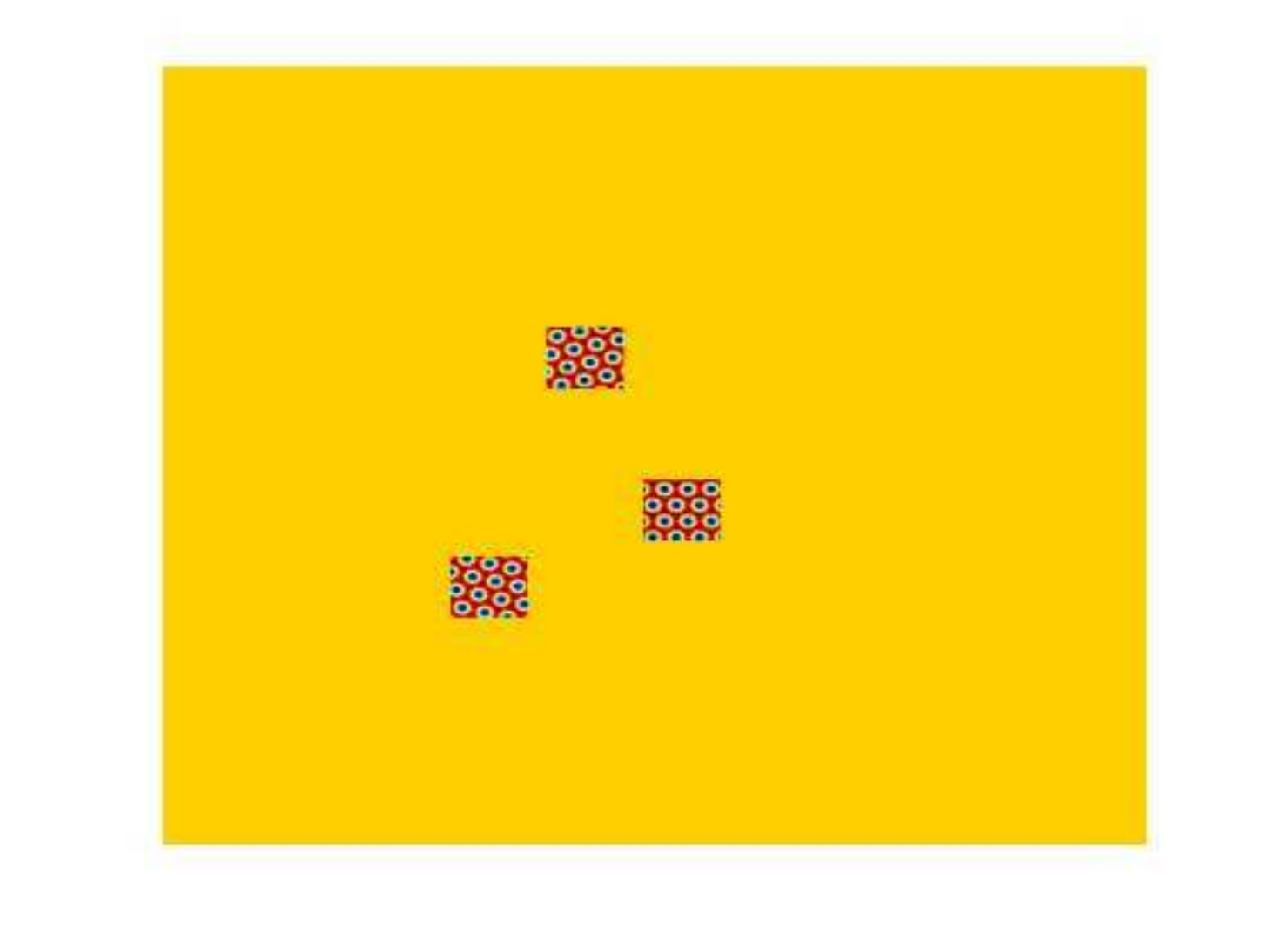}
}
\subfigure[t=50]
{
\includegraphics[width=5cm,height=5cm]{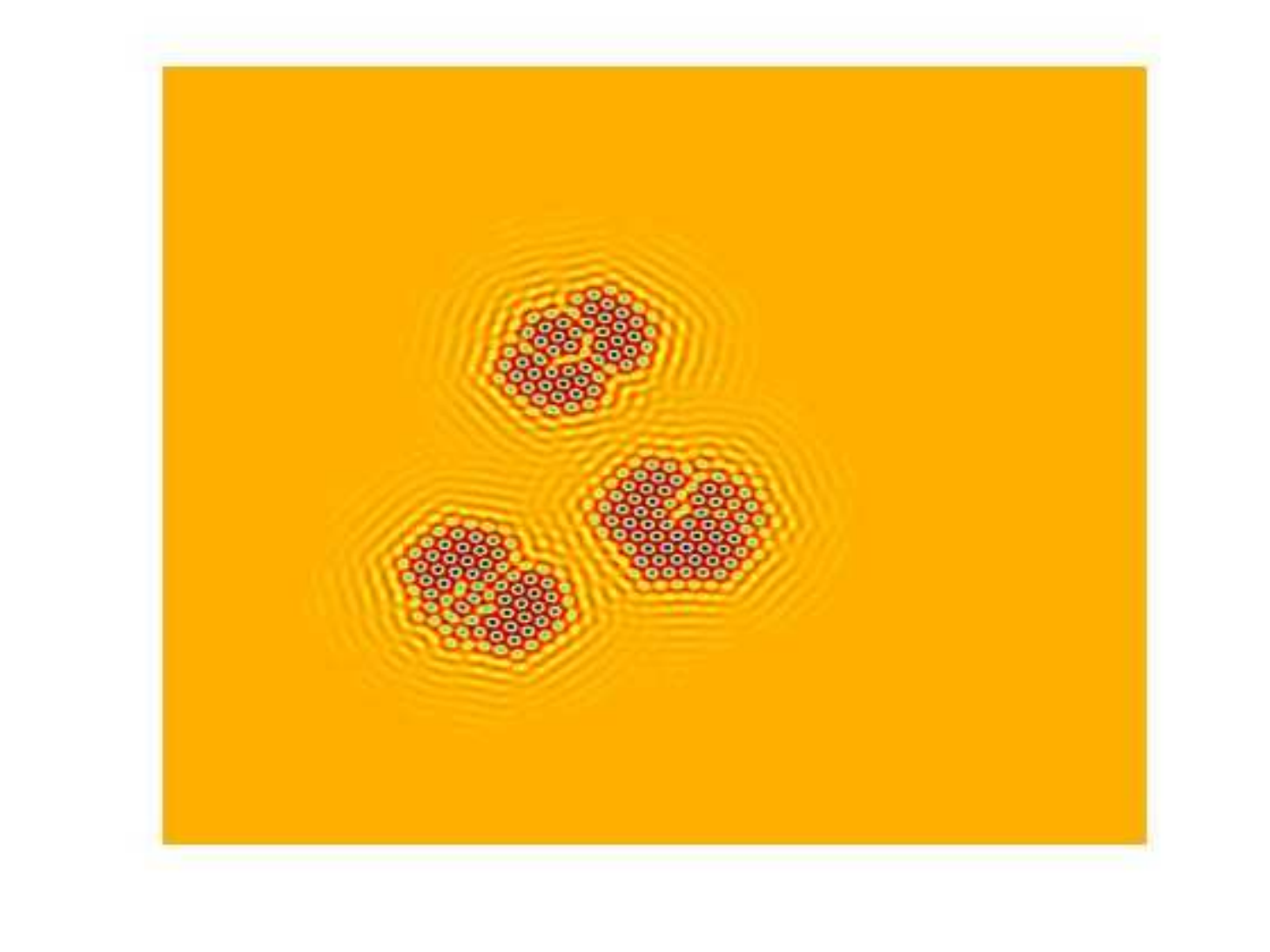}
}
\subfigure[t=100]
{
\includegraphics[width=5cm,height=5cm]{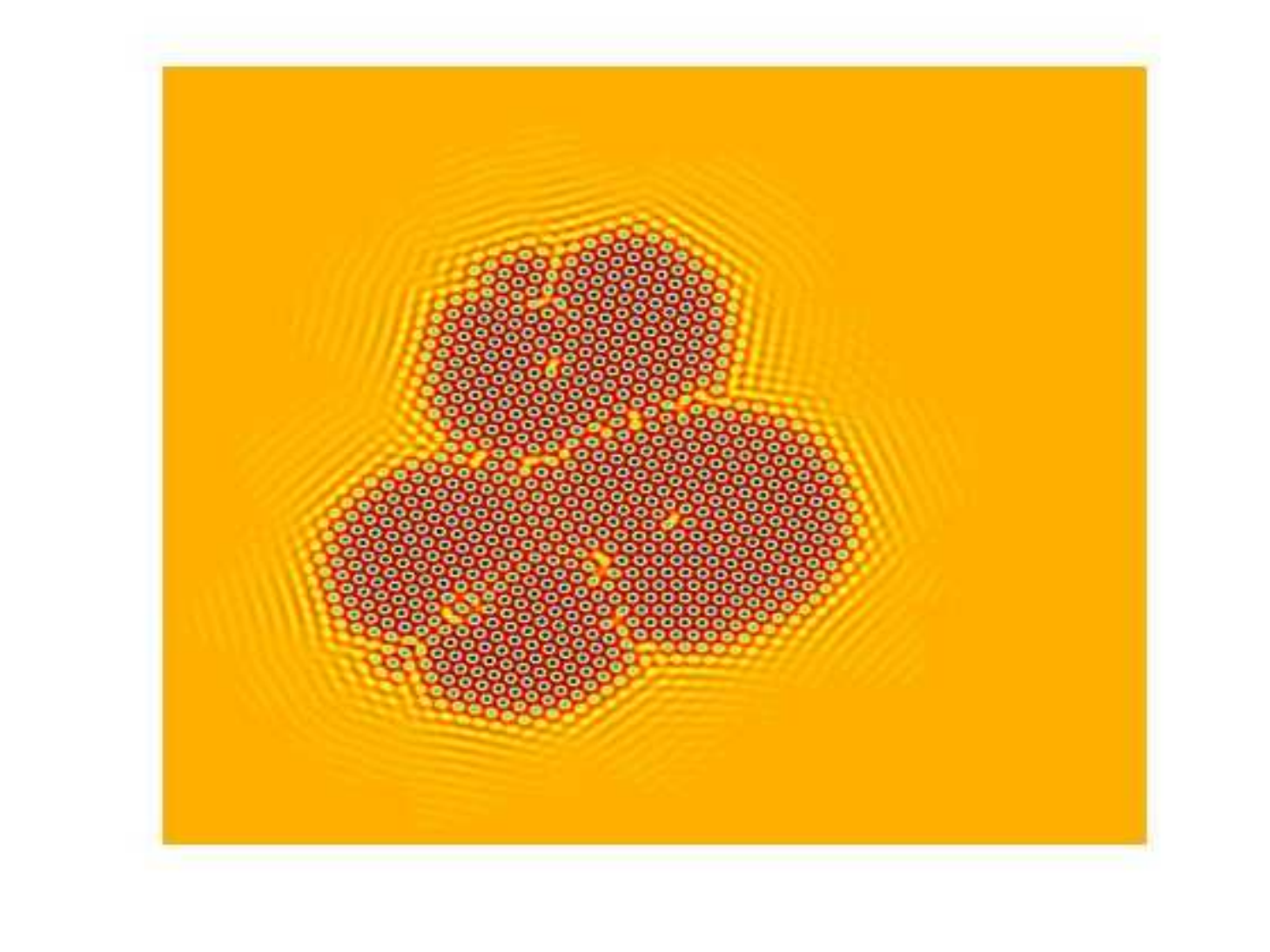}
}
\quad
\subfigure[t=500]{
\includegraphics[width=5cm,height=5cm]{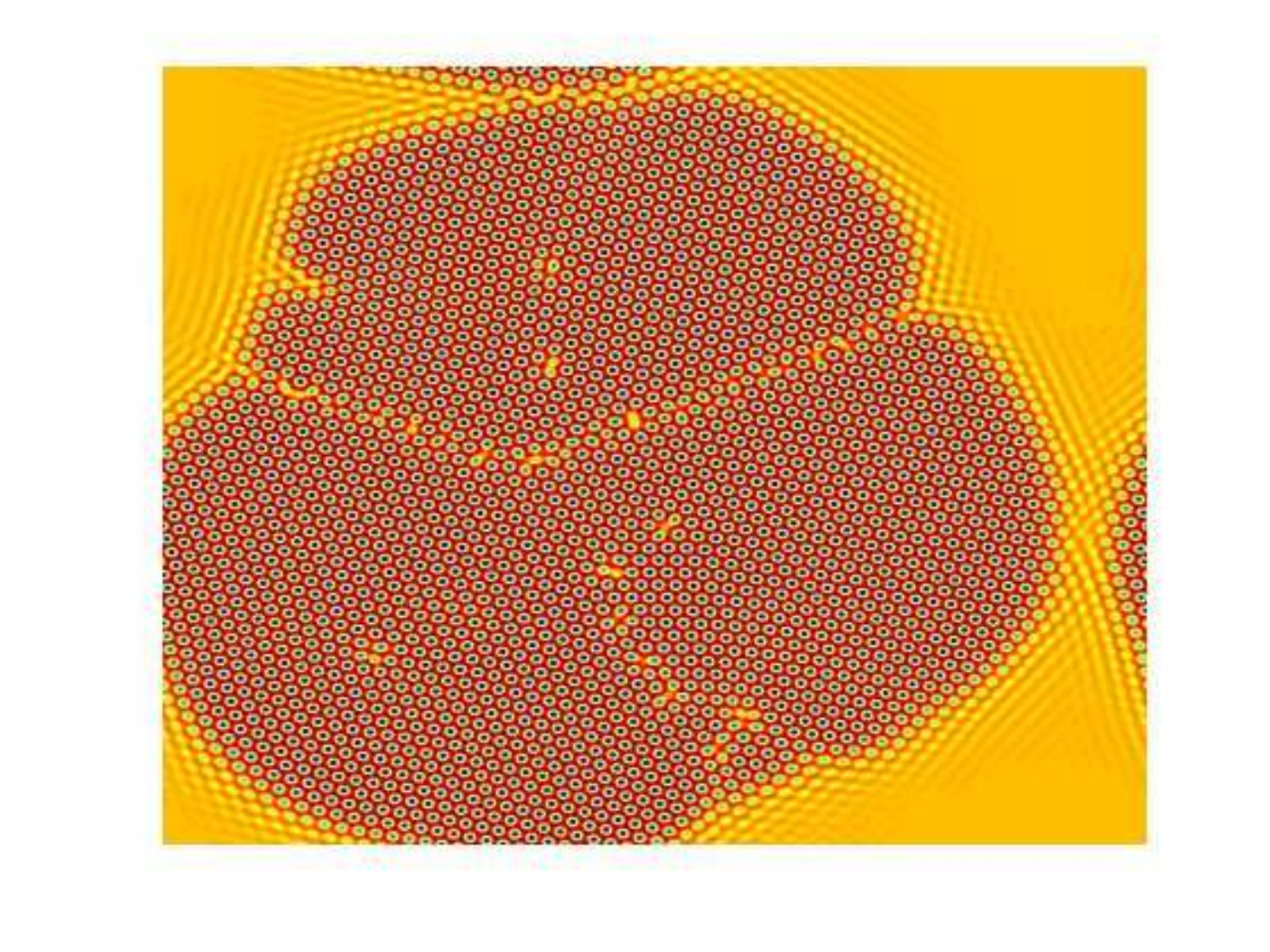}
}
\subfigure[t=600]
{
\includegraphics[width=5cm,height=5cm]{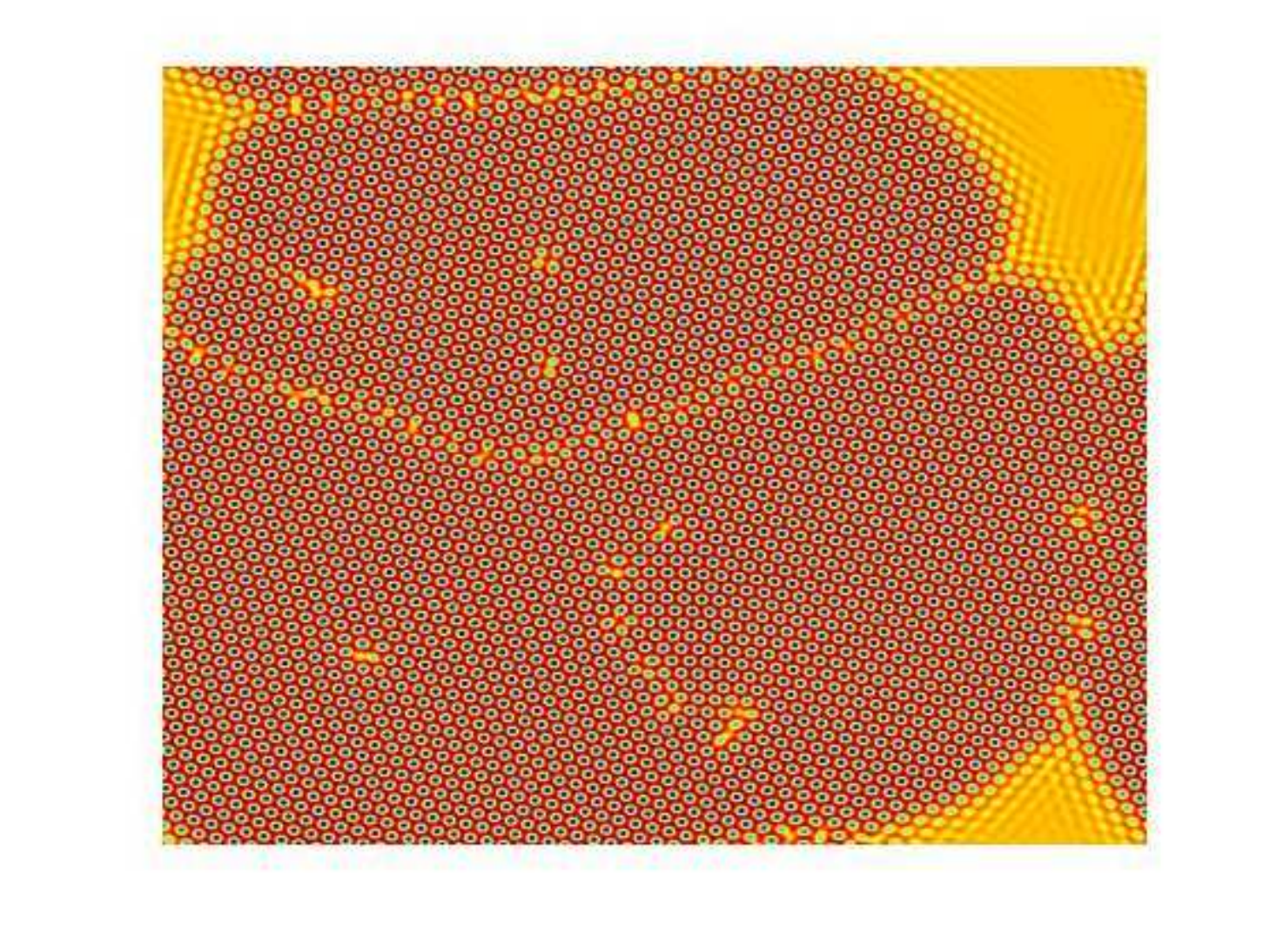}
}
\subfigure[t=800]
{
\includegraphics[width=5cm,height=5cm]{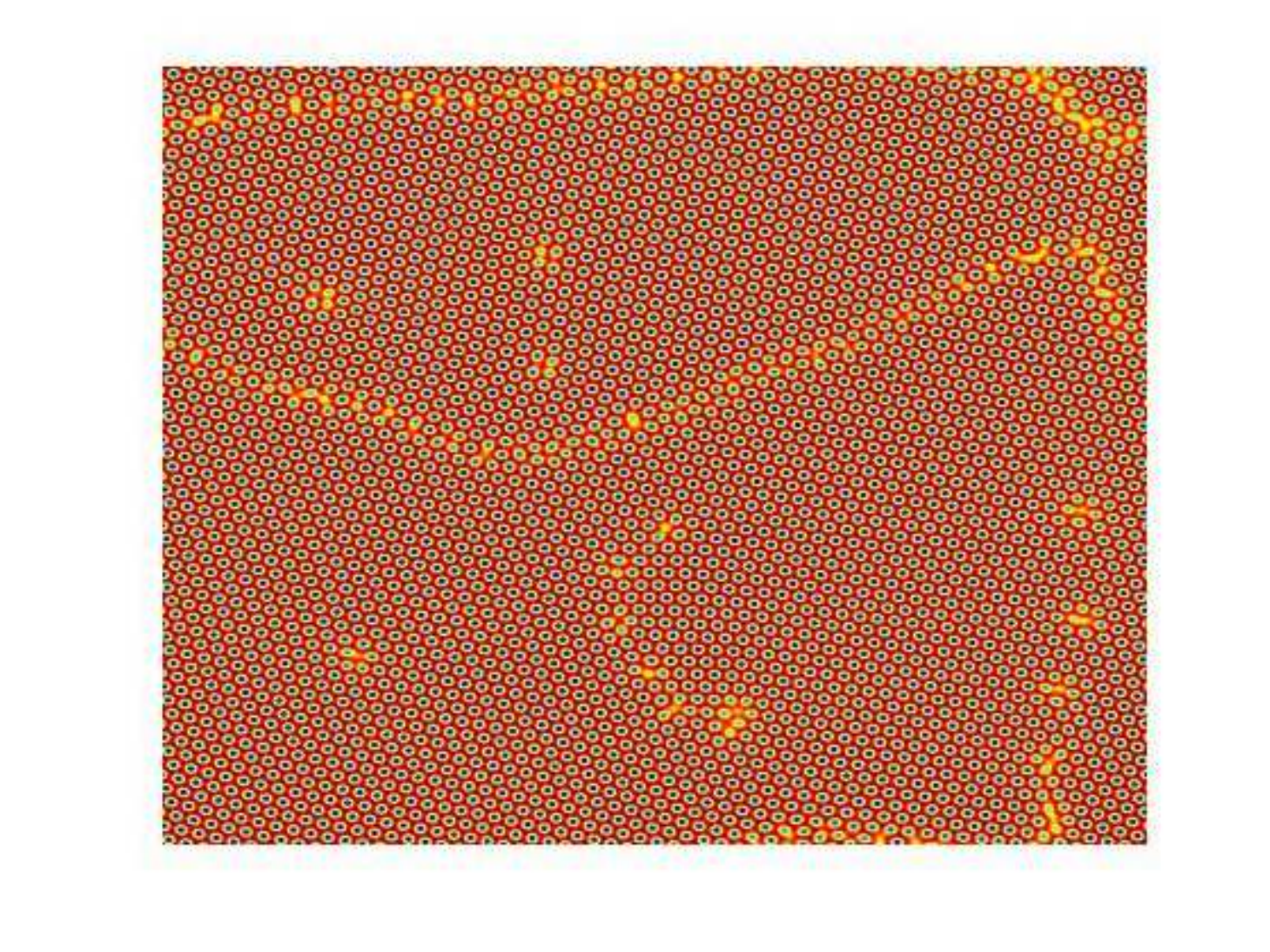}
}
\caption{Snapshots of the phase variable $\phi$ are taken at t=0, 200, 250, 350, 400, 500 for example 4.}\label{fig:fig5}
\end{figure}
\begin{figure}[htp]
\centering
\includegraphics[width=10cm,height=7cm]{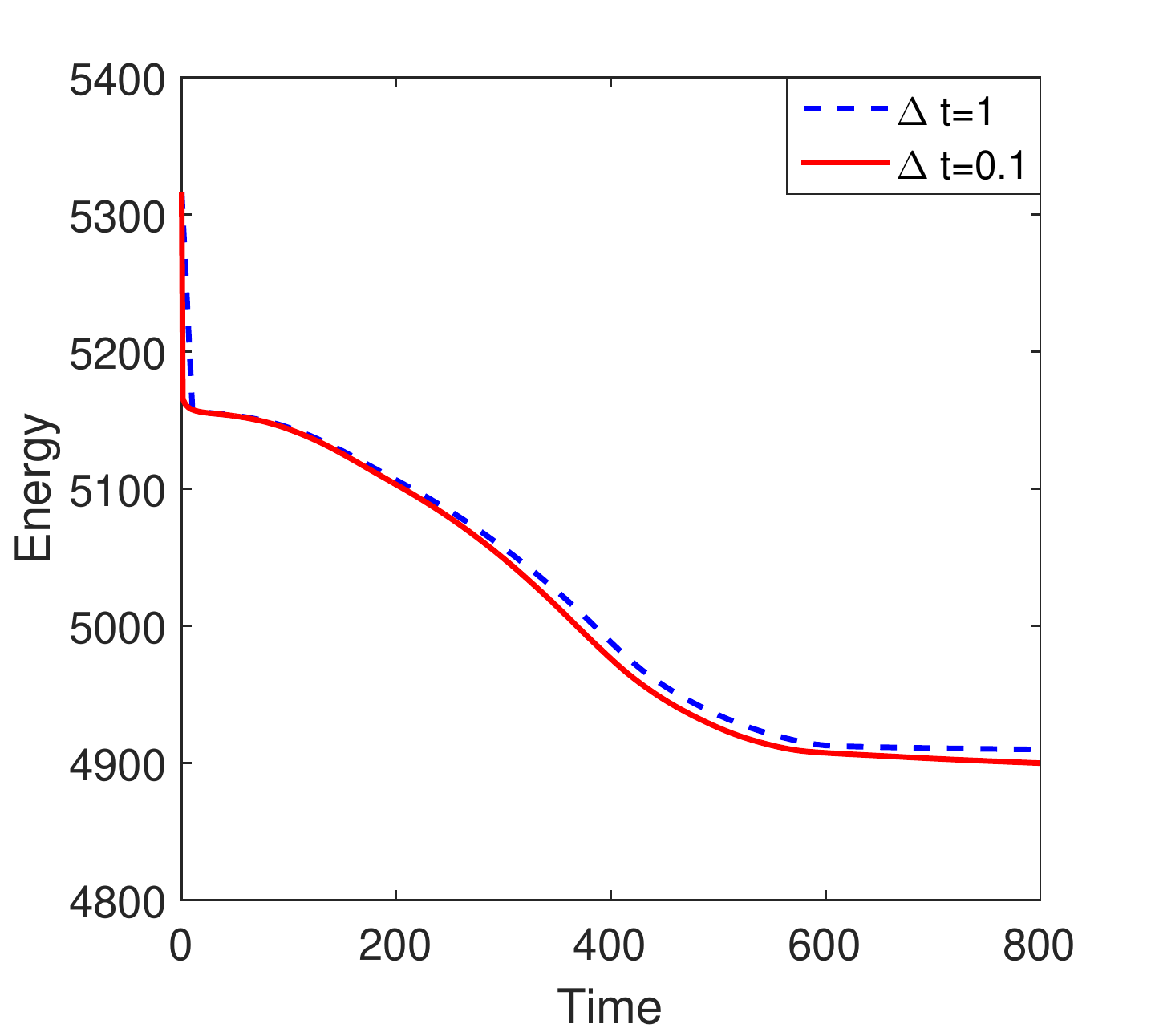}
\caption{Energy evolution of PFC model for example 4 with $\Delta t=0.1$ and $1$.}\label{fig:fig6}
\end{figure}
\section*{Acknowledgement}
No potential conflict of interest was reported by the author. We would like to acknowledge the assistance of volunteers in putting together this example manuscript and supplement.
\bibliographystyle{siamplain}
\bibliography{Reference}

\end{document}